\title{A generalization of the center theorem of the Thurston-Wolpert-Goldman Lie algebra}
\author{Aoi Wakuda}
\subjclass[2020]{Primary 57K20; Secondary 57M50, 17B70}
\address{GRADUATE SCHOOL OF MATHEMATICAL SCIENCES, UNIVERSITY OF TOKYO, 3-8-1 KOMABA, MEGURO-KU, TOKYO, 153-8914,
JAPAN}
\email{aoichan19991226@g.ecc.u-tokyo.ac.jp}
\newtheorem{thm}{Theorem}[section]
\newtheorem{prop}[thm]{Proposition}
\newtheorem{lemma}[thm]{Lemma}
\newtheorem*{mthm*}{Main Theorem}
\theoremstyle{definition}
\newtheorem{defi}[thm]{Definition}
\newtheorem{ex}[thm]{Example}
\newtheorem{remark}[thm]{Remark}
\newcommand{\Z}{\mathbb{Z}}
\newcommand{\R}{\mathbb{R}}
\newcommand{\C}{\mathbb{C}}
\newcommand{\wt}{\widetilde}
\newcommand{\ut}{\utilde}
\newcommand{\dfr}{\displaystyle\frac}
\newcommand{\Al}{\alpha}
\newcommand{\Be}{\beta}
\newcommand{\Ga}{\gamma}
\newcommand{\De}{\delta}
\newcommand{\La}{\lambda}
\newcommand{\wampbiz}{\wt{(\alpha^m*_{P}\beta_i)}_{0}}
\newcommand{\wampbii}{\wt{(\alpha^m*_{P}\beta_i)}_\infty}
\newcommand{\uampbiz}{\utilde{(\alpha^m*_{P}\beta_i)}_{0}}
\newcommand{\uampbii}{\utilde{(\alpha^m*_{P}\beta_i)}_\infty}
\begin{document}

\begin{abstract}

The Goldman Lie algebra of an oriented surface was defined by Goldman \cite{Goldman}. By the natural involution that opposes the orientation of curves, the Goldman Lie algebra becomes a $\mathbb{Z}_2$-graded Lie algebra. Its even part is isomorphic to the Thurston-Wolpert-Goldman Lie algebra or, briefly, the TWG Lie algebra. Chas and Kabiraj \cite{Chas-Kabiraj} proved the center of the TWG-Lie algebra is generated by the class of the unoriented trivial loop and the classes of unoriented loops parallel to boundary components or punctures. The center of the even part can be rephrased as the set of elements of the even part annihilated by all the elements of the even part. We also prove some similar statements for the remaining 3 cases involving the odd part. Moreover, we compute the elements of the symmetric algebra and the universal enveloping algebra of the Goldman Lie algebra annihilated by all the even elements of the Goldman Lie algebra, and those annilated by all the odd elements.

\end{abstract}

\maketitle

\section{Introduction} \label{intro}

Let $S$ be an oriented surface (possibly with boundary and punctures). We assume the Euler characteristic of $S$ is negative so that $S$ admits a hyperbolic metric. We also assume that the interior of $S$ is not homeomorphic to that of a pair of pants. We call it {\it a surface of type of a pair of pants.} Denote by $\hat{\pi}$ (resp. $\wt{\pi}$) the set of free homotopy classes of directed (resp. undirected) closed curves on $S$. Unless otherwise specified, we assume $K$ to be a commutative ring containing the ring $\Z\bigl[\dfr{1}{2}\bigr]$ . For any set $X$, we denote by $KX$ the free $K$-module generated by $X$.

In 1986, Goldman \cite{Goldman} introduced a Lie algebra structure on $K\hat{\pi}$ in his study of the symplectic structure on the moduli space of representations of the fundamental group of $S$. This bracket is called the {\it Goldman bracket}, and denoted by $[-,-]_{G}$. The free $K$-module $K\hat{\pi}$ with the Goldman bracket is called the {\it Goldman Lie algebra}. 

There is a natural involution $\iota : \hat{\pi} \to \hat{\pi}$ given by $\alpha\mapsto \alpha^{-1}$, which maps the curve $\alpha$ to the curve $\alpha$ with opposite orientation. By extending $\iota$ linearly to $K\hat{\pi}$, we obtain a Lie algebra involution $\iota : K\hat{\pi} \to K\hat{\pi}$. The involution $\iota$ defines two submodules of the Goldman Lie algebra of $S$. Recall $K$ is a commutative ring including $\Z\bigl[\dfr{1}{2}\bigr]$. Let $A_{0}$ be a submodule of $K$$\hat{\pi}$ generated by the elements of the form $\alpha+\iota\alpha$ and {\it$A_{1}$} a submodule of $K$$\hat{\pi}$ generated by the elements of the form $\alpha-\iota\alpha$. The Goldman Lie algebra $K\hat{\pi}$ has the decomposition
\begin{align}
K\hat{\pi}=A_{0}\oplus A_{1}.
\end{align}

\noindent
Since $\iota[\Al,\Be]_{G}=[\iota\Al,\iota\Be]_{G}$ for $\Al, \Be \in K\hat{\pi}$, the submodule $A_{0}$ is a Lie subalgebra of $K\hat{\pi}$. 
By the isomorphism $A_{0} \to K\wt{\pi}$ given by $\Al+\iota\Al\mapsto \wt{\Al}$ where $\widetilde{\alpha}$ denotes the free homotopy class $\alpha$ forgotten its orientation, the submodule $A_{0}$ is isomorphic to $K\wt{\pi}$ as a $K$-module. Therefore, $K\wt{\pi}$ becomes a Lie algebra induced by the isomorphism from $A_{0}$ to $K\wt{\pi}$. Following to Chas and Kabiraj \cite{Chas-Kabiraj}, we call the Lie algebra $K\wt{\pi}$ {\it Thurston-Wolpert-Goldman Lie algebra} or, briefly, the {\it TWG Lie algebra}.


One of the reasons we started thinking about $A_{1}$ is the Turaev cobracket  \cite{Turaev1991}. 
\begin{align*}
\De_{T} : (A_{0}/{K{\bf{1}}})\oplus A_{1} \to ((A_{0}/{K{\bf{1}}})\oplus A_{1})\otimes ((A_{0}/{K{\bf{1}}})\oplus A_{1}),
\end{align*}

\noindent
where $\bf{1}$ is the class of the trivial loop. The Goldman bracket together with the Turaev cobracket makes $K\hat{\pi}/K\bf{1}$ Lie bialgebra \cite{Turaev1991}. We came up with the following natural question: Does the TWG bracket together with the cobracket coming from the Turaev cobracket make $K\wt{\pi}/K\wt{\bf{1}}$ Lie bialgebra ?

The answer is no. By the definition of the Turaev cobracket, we have
\begin{align*}
\De_{T}(A_{0}/{K{\bf{1}}}) \subset ((A_{0}/{K{\bf{1}}})\otimes A_{1})\oplus (A_{1}\otimes (A_{0}/{K{\bf{1}}})).
\end{align*}
In the paper, we prove that $A_{1}$ has similar properties as $A_{0}$, though we don't discuss further the Turaev cobracket.

Computing the center of a given Lie algebra is a fundamental problem. We also call a closed curve {\it non-essential} if it is homotopic to a point or a boundary curve or to a puncture. 

In \cite{Etingof2006}, Etingof proved that the center of the Goldman Lie algebra $\C\hat{\pi}$ on a closed surface is generated by {\bf{1}}. He used moduli spaces of flat bundles to compute the center of the Goldman Lie algebra. In \cite{Kawazumi-Kuno2013}, Kawazumi and Kuno proved using representation theory of the symplectic group that the center of the Goldman Lie algebra on a surface with infinite genus and one boundary component is generated by the class of non-essential curves. In \cite{Kabiraj2016}, Kabiraj proved that the center of the Goldman Lie algebra on surfaces with boundary except surfaces of type of a pair of pants is generated by the class of non-essential curves. His proof introduced hyperbolic geometry to study the center of the Goldman Lie algebra. 
\begin{thm}
\cite[Center Theorem]{Chas-Kabiraj} 
The center of the TWG Lie algebra is generated by the class of non-essential curves as a $K$-module.
\end{thm}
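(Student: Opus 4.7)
The plan is to adapt the hyperbolic-geometric strategy of Kabiraj to the unoriented setting. Equip the interior of $S$ with a complete hyperbolic metric of finite area, so that every essential free homotopy class $\wt\alpha\in\wt\pi$ has a unique closed geodesic representative. Suppose for contradiction that $x=\sum_i c_i\wt{\alpha_i}$ is a central element of the TWG Lie algebra with the $\wt{\alpha_i}$ pairwise distinct, all $c_i\ne 0$, and at least one $\wt{\alpha_i}$ in the support essential and non-peripheral. I will exhibit a test curve $\wt\gamma$ with $[\wt\gamma,x]\ne 0$. Rank the essential non-peripheral classes in the support by a geometric complexity — say, hyperbolic length of the geodesic representative, broken if necessary by self-intersection number or conjugacy word length in $\pi_1(S)$ — and fix a maximal such class $\wt{\alpha_0}$ with coefficient $c_0$.

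Next I construct the test curve. Using the geometry of the geodesic representing $\alpha_0$, produce a simple closed curve $\gamma$ whose geodesic representative meets $\alpha_0$ transversally (for instance by closing up a short orthogeodesic arc in a neighbourhood of $\alpha_0$) and which is in general position with respect to the remaining $\alpha_i$. Unpacking the TWG bracket via the isomorphism $A_0\cong K\wt\pi$ one computes
\begin{align*}
[\wt\gamma,\wt{\alpha_i}]\;=\;\sum_{p\in\gamma\cap\alpha_i}\epsilon_p(\gamma,\alpha_i)\bigl(\wt{\gamma*_p\alpha_i}-\wt{\gamma*_p\iota\alpha_i}\bigr),
\end{align*}
so that each geometric intersection contributes two unoriented classes corresponding to the two possible relative orientations. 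The combinatorial heart of the argument is to exhibit a single class $\wt\delta$ occurring in $[\wt\gamma,\wt{\alpha_0}]$ whose geodesic representative has strictly greater hyperbolic length than every term appearing in $[\wt\gamma,\wt{\alpha_i}]$ for $i\ne 0$; this is arranged by the choice of $\gamma$ together with maximality of $\alpha_0$ in the complexity ordering.

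The main obstacle will be the cancellation analysis internal to the $i=0$ summand. In the oriented Goldman case handled by Kabiraj the sign $\epsilon_p$ already separates contributions of opposite relative orientation, but in the unoriented TWG setting one must additionally rule out accidental identifications of the form $\wt{\gamma*_p\alpha_0}=\wt{\gamma*_q\iota\alpha_0}$ at distinct intersection points $p,q$, which could introduce unexpected cancellations. To handle this I pass to the universal cover $\mathbb{H}^2$: the class $\wt\delta$ is represented by a piecewise geodesic polygon traced along specific lifts of the axes of $\gamma$ and $\alpha_0$, and a coincidence of the above form would force an isometric symmetry exchanging two such polygons — a rigidity which a generic perturbation of $\gamma$ rules out. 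Once this cancellation lemma is in place, the coefficient of $\wt\delta$ in $[\wt\gamma,x]$ is a nonzero scalar multiple of $c_0$, contradicting centrality. Finally, non-essential classes manifestly lie in the center: a trivial or peripheral loop admits a representative disjoint from any other curve, so its Goldman (and hence TWG) bracket with every generator vanishes.
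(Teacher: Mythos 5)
Your overall strategy (hyperbolic metric, geodesic representatives, a simple test curve, and a length-based separation of terms) is in the right spirit, but the central mechanism of the Chas--Kabiraj proof is missing, and the two devices you substitute for it do not work. First, the claim that maximality of $\ell_{\alpha_0}$ lets you produce a term $\wt\delta$ of $[\wt\gamma,\wt{\alpha_0}]$ strictly longer than every term of $[\wt\gamma,\wt{\alpha_i}]$, $i\neq 0$, is unjustified: by the $\cosh$ formula (Lemma \ref{Lem 2.7}) the length of $(\gamma*_P\alpha_i)_\infty$ is governed by $\cosh(\ell_\gamma/2)\cosh(\ell_{\alpha_i}/2)+\sinh(\ell_\gamma/2)\sinh(\ell_{\alpha_i}/2)\cos\theta_P$, so a class $\alpha_i$ of smaller (or equal) length but smaller angle $\theta_Q$ can produce a longer term than anything coming from $\alpha_0$; ties in length are not even excluded by your complexity ordering. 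Second, the ``generic perturbation of $\gamma$'' used to rule out accidental identifications $\wt{(\gamma*_P\alpha_0)}_0=\wt{(\gamma*_Q\alpha_0)}_\infty$ is vacuous: the TWG bracket depends only on the free homotopy class of $\gamma$, so perturbing a representative changes nothing, and replacing $\gamma$ by a different class is not a perturbation and destroys whatever intersection pattern you arranged.

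What actually closes these gaps in \cite{Chas-Kabiraj} (and in the version recalled in Section 3 of this paper) is the use of the whole family of powers $\wt{\alpha^m}$ of a \emph{single} fixed simple class $\alpha$: since $[\wt{\alpha^m},\wt\beta]_{TWG}=m\sum_i c_i\sum_P \wampbiz-\wampbii$, one fixes the intersection point $P$ of maximal angle and shows via the $\cosh$ identity, which is strictly monotone in $\ell_{\alpha^m}$, that a coincidence of type $\wt{(\alpha^m*_P\beta_1)}_0=\wt{(\alpha^m*_Q\beta_i)}_0$ can hold for at most one value of $m$ (Lemma \ref{Prop 3.4}(1)), while a coincidence of type $\wt{(\alpha^m*_P\beta_1)}_0=\wt{(\alpha^m*_Q\beta_i)}_\infty$ for two values of $m$ forces an intersection point of strictly larger angle (Lemma \ref{Prop 3.4}(2)), contradicting maximality of $\theta_P$. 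A pigeonhole over $m\in\{1,\dots,I+1\}$ then yields $m$ with $[\wt{\alpha^m},\wt\beta]_{TWG}\neq 0$ (Lemma \ref{Lem 3.5}), and Lemma \ref{Lem 3.6} converts ``disjoint from every simple closed curve'' into ``non-essential'' (this last step is exactly where the pair-of-pants exclusion enters, which you also omit). Your converse direction, that non-essential classes are central, is fine. To repair your write-up you should replace the single test curve and the length-maximality/perturbation arguments by the $\alpha^m$-family together with the angle-maximality argument.
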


\noindent
For any $K$-submodule $A$ of $K\hat{\pi}$ and $K$-submodule $M$ of a $K\hat{\pi}$-module, we denote
\begin{center}
$\mathrm{Ann}_{M}{A}\coloneq\lbrace m\in M| [a, m] = 0$ for all $a\in A\rbrace$.
\end{center}
The Center Theorem can be rephrased as determing the set $\mathrm{Ann}_{A_{0}}{A_{0}}$. In this paper, we address the following natural question: how about the set of $\mathrm{Ann}_{A_{i}}{A_{j}}$ in the case $(i,j)=(0,1),(1,0),(1,1)$ ?
Our main result is to give an answer to this question. 

\begin{thm} [Annihilator Theorem] \label{Annihilator Theorem}
The annihilator $\mathrm{Ann}_{A_0}{A_1}$ of $A_1$ in $A_0$ is generated by the elements of the form $\Al+\iota\Al$ such that $\Al$ is non-essential. The annihilator $\mathrm{Ann}_{A_1}{A_i}$ of $A_i$ in $A_1$  $(i=0,1)$ is generated by the elements of the form $\Al-\iota\Al$ such that $\Al$ is non-essential. In other words, the annihilator $\mathrm{Ann}_{K\hat{\pi}}{A_i}$ of $A_i$ in $K\hat{\pi}$ $(i=0,1)$ is generated by the classes of non-essential curves as a $K$-module.
\end{thm}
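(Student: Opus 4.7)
The plan is to follow the hyperbolic-geometric strategy used by Kabiraj \cite{Kabiraj2016} and refined by Chas-Kabiraj \cite{Chas-Kabiraj}. Equip $S$ with a complete hyperbolic metric and represent every free homotopy class by its unique geodesic, so that the Goldman bracket is computed by the transverse intersection formula
\[
[\alpha,\beta]_G \;=\; \sum_{p\in\alpha\cap\beta}\varepsilon_p(\alpha,\beta)\,(\alpha*_p\beta).
\]
The crucial geometric fact used in all the previous work is that for geodesics in minimal position, distinct intersection points $p\neq p'$ give distinct free homotopy classes $\alpha*_p\beta \neq \alpha*_{p'}\beta$, so no cancellation occurs on the right hand side. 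The easy inclusions in all three cases are immediate: a non-essential curve admits a geodesic representative disjoint from every other geodesic, so $[\alpha,\cdot]_G=0$, hence $\alpha\pm\iota\alpha$ lies in each of the claimed annihilators.

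For the reverse inclusions, the starting point is the sign identities $\varepsilon_p(\iota\alpha,\beta)=\varepsilon_p(\alpha,\iota\beta)=-\varepsilon_p(\alpha,\beta)$ and $\varepsilon_p(\iota\alpha,\iota\beta)=\varepsilon_p(\alpha,\beta)$, together with the free-homotopy identity $\iota(\alpha*_p\beta)=\iota\alpha*_p\iota\beta$. Expanding $[\alpha\pm\iota\alpha,\beta\pm\iota\beta]_G$ in each of the three cases produces an explicit signed sum, over $p\in\alpha\cap\beta$, of the four concatenations $\alpha*_p\beta$, $\alpha*_p\iota\beta$, $\iota\alpha*_p\beta$, $\iota\alpha*_p\iota\beta$, automatically grouped into the correct parity component of $K\hat{\pi}=A_0\oplus A_1$.

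Now suppose $x\in A_i$ lies in $\mathrm{Ann}_{A_i}(A_j)$, and write $x=\sum_k c_k\bigl(\alpha_k+(-1)^i\iota\alpha_k\bigr)+(\text{non-essential part})$, where the $\alpha_k$ are representatives of distinct essential $\iota$-orbits. Arguing by contradiction, assume some $c_{k_0}\neq 0$ and select $\alpha_{k_0}$ by a distinguishing geometric feature, for instance maximal hyperbolic length among the essential summands, as in Chas-Kabiraj. Construct a test geodesic $\beta$ intersecting $\alpha_{k_0}$ transversely at a carefully chosen point $p_0$ so that the resulting concatenation, together with its three $\iota$-companions, produces a term in the expansion of $[x,\beta\pm\iota\beta]_G$ that cannot cancel against contributions from a different $\alpha_k$ or from a different intersection point on $\alpha_{k_0}$. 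The Chas-Kabiraj machinery already supplies the tools for these external non-cancellation arguments, since at bottom they rely only on the geodesic minimal-position property.

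The main new obstacle, beyond what Chas-Kabiraj handled, is to rule out cancellations internal to the single pair $(\alpha_{k_0},\beta)$ caused by the involution, for example when $\alpha_{k_0}$ happens to be $\iota$-invariant as an element of $\hat{\pi}$, or when a palindromic coincidence of the form $\alpha_{k_0}*_{p_0}\beta=\iota\alpha_{k_0}*_{p_0}\iota\beta$ or $\alpha_{k_0}*_{p_0}\beta=\iota\alpha_{k_0}*_{p_0}\beta$ occurs as a free homotopy class. Such coincidences are non-generic, cut out by proper conditions on the Teichm\"uller data, and so can be destroyed by a small perturbation of $\beta$ within the space of admissible test curves. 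Once a surviving term is secured in each case $(i,j)\in\{(0,1),(1,0),(1,1)\}$, an inductive clearing of the coefficients $c_k$ forces $x$ into the span of non-essential $\alpha\pm\iota\alpha$, completing the proof of the Annihilator Theorem.
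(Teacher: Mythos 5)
Your overall framework (hyperbolic metric, geodesic representatives, testing an annihilator element against curves and arguing non-cancellation in the Chas--Kabiraj style) points in the right direction, but two load-bearing steps are wrong as stated. First, the ``crucial geometric fact'' you invoke --- that for geodesics in minimal position distinct intersection points $p\neq p'$ give distinct classes $\alpha*_p\beta\neq\alpha*_{p'}\beta$, so that no cancellation occurs --- is neither available here nor what Chas--Kabiraj actually use. In the undirected setting each intersection point contributes two terms $\wt{(\alpha*_{P}\beta)}_{0}$ and $\wt{(\alpha*_{P}\beta)}_{\infty}$ with opposite signs, and a $0$-term at $P$ may coincide with an $\infty$-term at $Q$ or with a term coming from a different summand $\beta_{j'}$; these cancellations are precisely the difficulty. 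The actual mechanism is to test against all powers $\alpha^m$ of a \emph{simple} closed geodesic and to use the translation-length identity (Theorem \ref{thm cosh} and Lemma \ref{Lem 2.7}) to show that each potential coincidence of terms can hold for at most one value of $m$ (Lemma \ref{Prop 3.4}), so that by pigeonhole over finitely many cancellation patterns some power yields a nonzero bracket. Your proposal never introduces the powers $\alpha^m$, and without them the non-cancellation claims have no proof.

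Second, your plan to destroy the $\iota$-induced coincidences (e.g.\ $\alpha_{k_0}=\iota\alpha_{k_0}$, or $\alpha*_{p}\beta=\iota\alpha*_{p}\iota\beta$) by a ``small perturbation of $\beta$'' cannot work: these are equalities of free homotopy classes, hence topological statements independent of the point of Teichm\"uller space and unchanged by homotoping $\beta$ within its class (the bracket depends only on free homotopy classes), so they are not cut out by proper conditions on the metric data. The paper resolves this non-generically: it proves via the holonomy representation into $PSL(2,\R)$ that $\alpha=\iota\alpha$ forces $\alpha=\mathbf{1}$ (Lemma \ref{Geodesic and 1 Lem}), and deduces that $\ut{\alpha}=\pm\ut{\beta}$ if and only if $\wt{\alpha}=\wt{\beta}$ (Lemma \ref{Key Lem}); this is exactly what reduces the three new parity cases of the bracket to the even case already controlled by Lemma \ref{Lem 3.5}. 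To close the gap you would need to supply these two ingredients, and you should also cite explicitly the final step that a class meeting every simple closed curve trivially is non-essential (Lemma \ref{Lem 3.6}), which fails for surfaces of type of a pair of pants and is why that case is excluded.
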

\noindent
In order to prove this result, we use the technique of hyperbolic geometry introduced by Chas and Kabiraj in \cite{Chas-Kabiraj}. Theorems similar to the Annihilator Theorem hold for the universal enveloping algebra $U(K\hat{\pi})$ and the symmetric algebra $S(K\hat{\pi})$  of $K\hat{\pi}$. 

\begin{thm} \label{The Annihilator Theorem for the symmetric algebra}
The annihilator $\mathrm{Ann}_{S(K\hat{\pi})}{A_i}$ of $A_i$ in $S(K\hat{\pi})$ $(i=0,1)$ is generated by scalars $K$, the classes of non-essential curves as a $K$-algebra.
\end{thm}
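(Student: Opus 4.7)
The $K$-algebra generated by the scalars $K$ and the non-essential curve classes is contained in $\mathrm{Ann}_{S(K\hat{\pi})}{A_i}$ for an easy reason: the adjoint action of $A_i$ extends from $K\hat{\pi}$ to $S(K\hat{\pi})$ as $K$-linear derivations, so $\mathrm{Ann}_{S(K\hat{\pi})}{A_i}$ is a $K$-subalgebra of $S(K\hat{\pi})$. It contains $K = S^0(K\hat{\pi})$ trivially and, by the Annihilator Theorem \ref{Annihilator Theorem}, contains every non-essential curve class in $S^1(K\hat{\pi}) = K\hat{\pi}$.

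For the reverse inclusion, let $Z$ be the $K$-span of the non-essential curve classes and $E$ the $K$-span of the essential ones, so that $K\hat{\pi} = Z \oplus E$ and $S(K\hat{\pi}) \cong S(Z) \otimes_K S(E)$. Each $X \in S(K\hat{\pi})$ has a unique expansion $X = \sum_\mu Y_\mu \cdot \mu$ with $Y_\mu \in S(Z)$ and $\mu$ an essential monomial. Because $Z \subset \mathrm{Ann}_{K\hat{\pi}}{A_i}$ and the action is by derivations, every $Y_\mu$ also lies in $\mathrm{Ann}_{S(K\hat{\pi})}{A_i}$, whence
\[
[\gamma, X] \;=\; \sum_\mu Y_\mu \,[\gamma, \mu] \qquad (\gamma \in A_i).
\]
Since the action preserves the symmetric grading, we may assume $X$ is homogeneous of degree $d \geq 1$, and the task becomes to show $Y_\mu = 0$ for every essential $\mu \neq 1$.

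The heart of the argument is a length-comparison step that adapts the hyperbolic-geometric technique used by Chas-Kabiraj in the proof of the Center Theorem. Fix a hyperbolic metric on $S$. Among the essential curves occurring in some $\mu$ with $Y_\mu \neq 0$, pick one $\alpha$ of maximal geodesic length. As in \cite{Chas-Kabiraj}, one selects $\gamma \in A_i$ together with a distinguished intersection $p \in \gamma \cap \alpha$ so that the Goldman resolution $\gamma_p \alpha$ is a geodesic strictly longer than every other resolution $\gamma_q \alpha''$ that appears when $\gamma$ meets any essential curve $\alpha''$ in the support of $X$. Isolating the coefficient of the monomial containing $\gamma_p \alpha$ in the expansion of $[\gamma, X]$ in a $K$-basis of $S(K\hat{\pi})$, the identity $[\gamma, X] = 0$ forces a linear relation which, pushed to the extreme term, yields $Y_\mu = 0$ for every $\mu$ containing $\alpha$. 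Removing $\alpha$ from $E$ and iterating on the next-maximal essential curve, all essential factors disappear after finitely many steps, giving $X \in S(Z)$.

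The main obstacle is the uniqueness of the maximal-length stratum in $[\gamma, X]$: contributions from different $\mu$'s and from different factor-positions within a single $\mu$ can a priori produce monomials of coincident total length, which would obstruct the extraction of a single leading coefficient. I would overcome this by ordering monomials lexicographically on the multiset of hyperbolic lengths of their factors, breaking ties via the combinatorial position of $p$ as in \cite{Chas-Kabiraj}, and by arranging $\gamma$ with sufficiently large length so that $\ell(\gamma_p \alpha) - \ell(\alpha)$ dominates the length spread within the support of $X$. This reduces each inductive step to a situation directly controlled by Theorem \ref{Annihilator Theorem}, i.e.\ the $d=1$ case.
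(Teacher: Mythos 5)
Your first inclusion and your reduction to showing that no essential class can occur as a factor of a monomial in the support of $X$ are fine, and they match the skeleton of the paper's argument (the paper works directly with the monomial basis of $S(K\hat{\pi})$ built from $\wt{T}\cup\ut{T}$ rather than with the factorization $S(Z)\otimes_K S(E)$, but that difference is cosmetic). The genuine gap is in what you call the heart of the argument. You assert that one can choose a single $\gamma\in A_i$ and a distinguished intersection $p$ so that the resolution $\gamma_p\alpha$ is a geodesic strictly longer than every other resolution appearing in $[\gamma,X]$, and you propose to repair the evident difficulty of coincident terms by a lexicographic order on length multisets, by ``breaking ties via the combinatorial position of $p$,'' and by taking $\gamma$ very long. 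None of these repairs works. Cancellation in the free $K$-module $S(K\hat{\pi})$ occurs between equal monomials of free homotopy classes regardless of how they were produced, so a combinatorial tie-break is meaningless. More seriously, by Lemma \ref{Lem 2.7} the curves $(\gamma*_p\alpha)_0$ and $(\gamma*_q\alpha'')_\infty$ have equal length for \emph{every} hyperbolic metric whenever $\ell_{\alpha}=\ell_{\alpha''}$ and $\theta_p+\theta_q=\pi$; no choice of a longer $\gamma$ separates them, and since the length of a resolution depends on the angle as much as on $\ell_\alpha$, maximality of $\ell_\alpha$ does not give maximality of $\ell_{\gamma_p\alpha}$ among all resolutions $\gamma_q\alpha''$. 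This is exactly the degeneracy the Chas--Kabiraj machinery is built to handle, and it is resolved not by one cleverly chosen $\gamma$ but by the one-parameter family $\gamma=\alpha^m$ for a simple closed geodesic $\alpha$, together with the maximal-angle intersection point $P$: each potential coincidence can persist for at most one or two values of $m$ (Lemma \ref{Prop 3.4}), or else forces an intersection point of strictly larger angle, contradicting the choice of $P$.

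A second omission: in the symmetric algebra a resolved factor can also collide with an \emph{unresolved} factor of another monomial, i.e.\ one must exclude $\wt{(\alpha^m*_{P}\beta)}_{0}=\wt{\gamma}$ and $\wt{(\alpha^m*_{P}\beta)}_{0}=\wt{\alpha^m}$. These degenerate coincidences do not arise in the degree-one (Lie algebra) case, so they are not ``directly controlled by the $d=1$ case'' as you claim; they are precisely what Lemmas \ref{Lem for type3} and \ref{Lem for type4} are introduced for, and your sketch never rules them out. Finally, for $i=1$ one must track signs in $K\ut{\pi}$, where $\ut{\alpha}$ can cancel against $-\ut{\beta}$; this requires Lemma \ref{Key Lem} and is absent from your outline. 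In short, the reduction is right, but the decisive step excluding persistent cancellation is asserted rather than proved, and the proposed mechanism for proving it cannot work as stated.
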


The above Theorem \ref{The Annihilator Theorem for the symmetric algebra} naturally extends to the Poisson algebras $S_{k}(K\hat{\pi})$ which are the deformations of $S(K\hat{\pi})$ introduced by Turaev in \cite[Section 2.2]{Turaev1991}
\begin{thm}
The annihilator $\mathrm{Ann}_{U(K\hat{\pi})}{A_i}$ of $A_i$ in $U(K\hat{\pi})$ $(i=0,1)$ is generated by scalars $K$, the classes of non-essential curves as a $K$-algebra.
\end{thm}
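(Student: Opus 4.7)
The plan is to reduce the statement to Theorem~\ref{The Annihilator Theorem for the symmetric algebra} via the Poincar\'e--Birkhoff--Witt filtration on $U(K\hat{\pi})$. Let $B\subset U(K\hat{\pi})$ denote the $K$-subalgebra generated by $K$ together with the classes of non-essential directed curves in $\hat{\pi}$. Every such class admits a representative contained in an arbitrarily small neighbourhood of the puncture or boundary component it bounds, or of a point in the case of the trivial loop; hence its Goldman bracket with any other class in $\hat{\pi}$ vanishes, so each generator of $B$ is central in $U(K\hat{\pi})$. In particular $B$ is commutative, and $B\subset\mathrm{Ann}_{U(K\hat{\pi})}A_{i}$ for $i=0,1$.

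For the reverse inclusion, write $U_{0}\subset U_{1}\subset\cdots$ for the PBW filtration on $U(K\hat{\pi})$ and identify $\mathrm{gr}\,U(K\hat{\pi})$ with $S(K\hat{\pi})$ as a Poisson algebra, the induced Poisson bracket being exactly the one appearing in Theorem~\ref{The Annihilator Theorem for the symmetric algebra}. For $a\in U_{1}$ and $x\in U_{n}$ the commutator $[a,x]=ax-xa$ lies in $U_{n}$, and its image in $U_{n}/U_{n-1}=S^{n}(K\hat{\pi})$ equals the Poisson bracket $\{a,\sigma_{n}(x)\}$ of the symbols. I would then argue by induction on $n$ that every $x\in U_{n}\cap\mathrm{Ann}_{U(K\hat{\pi})}A_{i}$ lies in $B$. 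The case $n=0$ is immediate from $U_{0}=K\subset B$. For the inductive step, the condition $[a,x]=0$ for all $a\in A_{i}$ forces $\{a,\sigma_{n}(x)\}=0$ in $S(K\hat{\pi})$, so by Theorem~\ref{The Annihilator Theorem for the symmetric algebra} the symbol $\sigma_{n}(x)$ is a homogeneous polynomial of degree $n$ in non-essential curves. Interpreting the same polynomial in the commutative subalgebra $B$ produces a lift $y\in B\cap U_{n}$ with $\sigma_{n}(y)=\sigma_{n}(x)$; then $x-y\in U_{n-1}$ still annihilates $A_{i}$ (because $y\in B$ does), and the inductive hypothesis gives $x-y\in B$, whence $x\in B$.

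The substantive content is already carried by Theorem~\ref{The Annihilator Theorem for the symmetric algebra}; what is added here is the standard PBW translation between the commutator on $U(K\hat{\pi})$ and the Poisson bracket on $S(K\hat{\pi})$. The only point requiring care is the identification of the symmetric-algebra Poisson bracket used in Theorem~\ref{The Annihilator Theorem for the symmetric algebra} with the one induced by the PBW filtration from the Goldman bracket, but this is the standard PBW construction and introduces no new geometric input beyond the hyperbolic techniques already invoked for Theorems~\ref{Annihilator Theorem} and~\ref{The Annihilator Theorem for the symmetric algebra}.
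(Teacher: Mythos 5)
Your proposal is correct, and it reaches the conclusion by a genuinely different route from the paper. The paper's proof works directly inside $U(K\hat{\pi})$: it expands an annihilating element in the PBW basis $\mathcal{B}$, computes $[\wt{\Al^m},\Be]_{TWG}$ term by term, uses $xy=yx-[x,y]$ to push each resulting monomial back into $\mathcal{B}$ (the correction terms having strictly lower length), isolates a summand with $h_i+l_i$ maximal, and then reruns the whole hyperbolic case analysis (Lemma \ref{Prop 3.4}, Lemma \ref{Lem 2.7}, Lemma \ref{Key Lem}) on that leading term before subtracting it off and iterating. Your argument replaces all of this with the observation that the associated graded of the PBW filtration is $S(K\hat{\pi})$ with the Leibniz extension of the Goldman bracket, so that the top symbol of an annihilating element of $U_n$ is annihilated in $S(K\hat{\pi})$ and Theorem \ref{The Annihilator Theorem for the symmetric algebra} (the $k=0$ case) applies; descending induction on the filtration degree then finishes the proof. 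The two ingredients you need beyond the symmetric-algebra theorem are both present and both valid: the centrality of non-essential classes (which makes $B$ commutative, makes the lift $y$ of the degree-$n$ symbol unambiguous and an element of $\mathrm{Ann}_{U(K\hat{\pi})}A_i$, and gives the easy inclusion $B\subseteq\mathrm{Ann}_{U(K\hat{\pi})}A_i$), and the fact that the subalgebra generated by $K$ and the non-essential classes is graded, so a homogeneous symbol lying in it is a combination of degree-$n$ monomials in non-essential classes. What your approach buys is economy and robustness: the geometric input is quoted once, in the symmetric-algebra theorem, rather than repeated, and the somewhat delicate bookkeeping of the paper's rearrangement step is absorbed into the standard statement that $\mathrm{gr}\,U(\mathfrak{g})\cong S(\mathfrak{g})$ as Poisson algebras, which holds here because $K\hat{\pi}$ is a free $K$-module. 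The paper's version, by contrast, is self-contained within $U(K\hat{\pi})$ and does not presuppose the symmetric-algebra result, but is logically interchangeable with yours.
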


\noindent
\textbf{Organization of the paper.} 
In Section 2, we recall the definitions and properties of the Goldman Lie algebra introduced in \cite{Goldman} and the TWG Lie algebra introduced in \cite{Chas-Kabiraj}. In Section 3, we recall some results between the hyperbolic geometry of the upper half plane $\mathbb{H}$ and the TWG bracket. We prove the Annihilator Theorem in Section 4. In the final Section 5, we compute the annihilator $\mathrm{Ann}_{S(K\hat{\pi})}{A_i}$ of $A_i$ in $S(K\hat{\pi})$ and the annihilator $\mathrm{Ann}_{U(K\hat{\pi})}{A_i}$ of $A_i$ in $U(K\hat{\pi})$ for $i=0,1$.\\

\noindent
\textbf{Acknowledgment.} The author would like to thank my supervisor, Nariya Kawazumi, for many discussions and helpful advice. The author would also like to thank Kento Sakai for noticing Lemma \ref{Lem 3.6} does not hold in the case where $S$ is a surface of type of a pair of pants. This work was supported by the WINGS-FMSP program at the Graduate School of Mathematical Sciences, the University of Tokyo.

\section{The Goldman Lie algebra and The Thurston-Wolpert-Goldman Lie algebra} \label{Goldman Lie algebra and Thurston-Wolpert-Goldman Lie algebra}

In this section, we recall the definition of the Goldman Lie algebra and the TWG Lie algebra. 

\subsection{The Goldman Lie algebra} \label{The Goldman Lie algebra}

Let $S$ be an oriented surface and $K$ a commutative ring including $\Z\bigl[\dfr{1}{2}\bigr]$. Denote by $\hat{\pi}$ the set of free homotopy classes of directed closed curves on $S$ and by $|{\alpha}|$ the free homotopy class of a directed closed curve $\alpha$.

The {\it Goldman bracket} of $\alpha$, $\beta$ $\in$ $\hat{\pi}$ is defined by the formula
\begin{align}
[\alpha,\beta]_{G}\coloneqq\sum_{P\in{\alpha}\cap{\beta}}\varepsilon_{P}(\alpha,\beta)|{\alpha_{P}\beta_{P}}| .
\end{align}

Here the representatives $\alpha$ and $\beta$ are chosen so that they intersect transversely in a set
of double points ${\alpha}\cap{\beta}$ , $\varepsilon_{P}(\alpha,\beta)$ denotes the sign of the intersection between $\alpha$ and $\beta$ at an
intersection point ${P}$, and ${\alpha_{P}\beta_{P}}$ denotes the loop product of $\alpha$ and $\beta$ at $P$.\\

In \cite{Goldman}, Goldman proved the bracket defined above is well defined, skew-symmetric, and satisfies the Jacobi identity on $K$$\hat{\pi}$. In other words, $K$$\hat{\pi}$ is a Lie algebra.

\subsection{The Thurston-Wolpert-Goldman Lie algebra} \label{The Thurston-Wolpert-Goldman Lie algebra}

Let $\sim$ be the equivalence relation on $\hat{\pi}$, such that $\Al$ $\sim$ $\Be$ if and only if $\Al = \Be$ or $\Al$ = $\Be^{-1}$. Denote by $\widetilde{\pi}$ the quotient set of $\hat{\pi}$ by $\sim$. Geometrically, we can interpret $\widetilde{\pi}$ as the set of free homotopy classes of undirected closed curves on $S$. There is a natural surjection $\widetilde{\cdot} : \hat{\pi} \to \widetilde{\pi}$ ; $\alpha$ $\mapsto$ $\widetilde{\alpha}$ where $\widetilde{\alpha}$ denotes the free homotopy class $\alpha$ forgotten its orientation. By extending $\widetilde{\cdot}$ linearly to $K$$\hat{\pi}$, we obtain a surjective $K$-linear map $\widetilde{\cdot} : K\hat{\pi} \to K\widetilde{\pi}$. \\
\begin{defi}
Given two directed curves $\alpha$ and $\beta$ on $S$ and a transversal intersection point P of
$\alpha$ and $\beta$, we define new curves $(\alpha*_{P}\beta)_{0},  (\alpha*_{P}\beta)_{\infty}$
\begin{align}
(\alpha*_{P}\beta)_{0}\coloneqq |{\alpha_{P}\beta^{\varepsilon_{P}(\alpha,\beta)}_{P}}|, \\
(\alpha*_{P}\beta)_{\infty}\coloneqq |{\alpha_{P}\beta^{-\varepsilon_{P}(\alpha,\beta)}_{P}}|.
\end{align}
\noindent
The new curve $(\alpha*_{P}\beta)_{0}$ (resp.$(\alpha*_{P}\beta)_{\infty}$) means modifying the orientation of $\beta$ so that the sign of the intersection between $\alpha$ and $\beta$ at an intersection point $P$ is 1 (resp.$-1$).

\end{defi}

In \cite{Chas-Kabiraj}, $(\alpha*_{P}\beta)_{0} $ and $(\alpha*_{P}\beta)_{\infty}$ are defined in the case where $\Al$ and $\Be$ are undirected. In this paper, we define them in the case where $\Al$ and $\Be$ are directed. Therefore, after taking the upper tilde, this notation matches that of \cite{Chas-Kabiraj}.

\begin{prop}
The {\it TWG bracket} of $\wt{\Al}$, $\wt{\Be}$ $\in$ $\wt{\pi}$ can be written as:
\begin{align}
[\widetilde{\alpha},\widetilde{\beta}]_{TWG}=\sum_{P\in{\alpha}\cap{\beta}}\widetilde{(\alpha*_{P}\beta)}_{0}-\widetilde{(\alpha*_{P}\beta)}_{\infty}.
\end{align}

\noindent
Here the representatives $\alpha$ and $\beta$ are chosen so that they intersect transversely in a set of double points $\alpha\cap\beta$.

\end{prop}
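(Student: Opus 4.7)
The plan is to reduce the formula to the Goldman bracket via the transport of structure that defines the TWG bracket. The isomorphism $\phi\colon A_{0}\to K\wt{\pi}$, $\gamma+\iota\gamma\mapsto\wt{\gamma}$, is by construction a Lie algebra isomorphism, so I would start by writing
\[
[\wt{\alpha},\wt{\beta}]_{TWG}=\phi\bigl([\alpha+\iota\alpha,\,\beta+\iota\beta]_{G}\bigr)
\]
and expanding by bilinearity into four Goldman brackets.

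The second step is to use that $\iota$ is a Lie involution of $K\hat{\pi}$, so $[\iota\xi,\iota\eta]_{G}=\iota[\xi,\eta]_{G}$. Thus $[\alpha,\beta]_{G}+[\iota\alpha,\iota\beta]_{G}$ is of the shape $\gamma+\iota\gamma$ with $\gamma=[\alpha,\beta]_{G}$, and the mixed pair similarly with $\gamma=[\alpha,\iota\beta]_{G}$. Applying $\phi$ then gives
\[
[\wt{\alpha},\wt{\beta}]_{TWG}=\wt{[\alpha,\beta]_{G}}+\wt{[\alpha,\iota\beta]_{G}},
\]
where $\wt{\cdot}\colon K\hat{\pi}\to K\wt{\pi}$ is the $K$-linear orientation-forgetting projection.

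The third step is to substitute Goldman's formula. Taking the same underlying curve for $\beta$ and $\iota\beta=\beta^{-1}$ yields the same transverse intersection set $\alpha\cap\beta$, with opposite signs $\varepsilon_{P}(\alpha,\iota\beta)=-\varepsilon_{P}(\alpha,\beta)$ and loop products differing only in the orientation of the $\beta$-strand. So the combined sum collapses to
\[
\sum_{P\in \alpha\cap\beta}\varepsilon_{P}(\alpha,\beta)\bigl(|\alpha_{P}\beta_{P}|-|\alpha_{P}\beta^{-1}_{P}|\bigr).
\]
The remaining verification, which is the only point requiring care, is a case check on $\varepsilon_{P}=\pm 1$: the definitions $(\alpha*_{P}\beta)_{0}=|\alpha_{P}\beta^{\varepsilon_{P}}_{P}|$ and $(\alpha*_{P}\beta)_{\infty}=|\alpha_{P}\beta^{-\varepsilon_{P}}_{P}|$ are arranged precisely so that the outer sign $\varepsilon_{P}$ cancels the sign dependence inside, and in either case the summand equals $(\alpha*_{P}\beta)_{0}-(\alpha*_{P}\beta)_{\infty}$. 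Applying $\wt{\cdot}$ then yields the claimed formula. I expect no genuine obstacle — the content is essentially bookkeeping of signs — but this last sign check is the one place where the specific form of the two smoothings enters.
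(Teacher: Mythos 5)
Your proposal is correct, and it is exactly the intended argument: the paper states this proposition without proof (treating it as immediate from the identification $A_{0}\cong K\wt{\pi}$ and deferring to Chas--Kabiraj), and your computation --- expanding $[\alpha+\iota\alpha,\beta+\iota\beta]_{G}$, using $\iota[\xi,\eta]_{G}=[\iota\xi,\iota\eta]_{G}$ to pair the four terms, and then checking the two cases $\varepsilon_{P}=\pm1$ against the definitions of $(\alpha*_{P}\beta)_{0}$ and $(\alpha*_{P}\beta)_{\infty}$ --- is the standard verification. The sign bookkeeping in your final step checks out in both cases.
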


\noindent
By the decomposition $K\hat{\pi}=A_{0}\oplus A_{1}$, we can identify $A_{0}$ with $K\hat\pi/A_{1}$. The surjection $\wt{\cdot}$ induces an isomorphism from $K\hat\pi/A_{1}$ to $K\wt{\pi}$. On the other hand, we denote by $\ut{\cdot}$ the natural surjection from $K\hat{\pi}$ to $K\hat{\pi}/A_{0}$.
Also, we have a $K$-module isomorphism from $A_{1}$ to $K\hat{\pi}\slash{A_{0}}$ given by $\alpha-\iota\alpha$ to $\ut\alpha$.
To align with the notation above, we write $K\ut{\pi}$ instead of $K\hat{\pi}\slash{A_{0}}$.


Note that a curve $\Al$ is identified with the curve $\iota\Al$ in $K\widetilde{\pi}$, while a curve $\Al$ is identified with minus the curve $-\iota\Al$ in $K\ut{\pi}$.

The bracket on $K\wt{\pi}$ coming from the Goldman bracket on $A_0$ is the TWG bracket. Similarly, we call it the TWG bracket that the bracket on $K\wt\pi\oplus K\ut\pi$ induced by the Goldman bracket on $K\hat{\pi}$. Therefore, the following holds:

\begin{prop}
For $\Al, \Be \in \hat{\pi}$, the {\it TWG bracket} $[\wt{\alpha},\ut{\beta}]_{TWG}$, $[\ut{\alpha},\wt{\beta}]_{TWG}$ and $[\ut{\alpha},\ut{\beta}]_{TWG}$ can be written as:
\begin{align}
&[\wt{\alpha},\ut{\beta}]_{TWG}=\sum_{P\in{\alpha}\cap{\beta}}\varepsilon_{P}(\Al,\Be)(\ut{(\alpha*_{P}\beta)}_{0}+\ut{(\alpha*_{P}\beta)}_{\infty}),\\
&[\ut{\alpha},\wt{\beta}]_{TWG}=\sum_{P\in{\alpha}\cap{\beta}}\ut{(\alpha*_{P}\beta)}_{0}-\ut{(\alpha*_{P}\beta)}_{\infty},\\
&[\ut{\alpha},\ut{\beta}]_{TWG}=\sum_{P\in{\alpha}\cap{\beta}}\varepsilon_{P}(\Al,\Be)(\widetilde{(\alpha*_{P}\beta)}_{0}+\widetilde{(\alpha*_{P}\beta)}_{\infty}).
\end{align}

\noindent
Here the representatives $\alpha$ and $\beta$ are chosen so that they intersect transversely in a set of double points ${\alpha}\cap{\beta}$.
\end{prop}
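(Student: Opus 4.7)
The strategy is to pull each mixed bracket back to $K\hat\pi$ via the decomposition $K\hat\pi = A_0 \oplus A_1 \cong K\wt\pi \oplus K\ut\pi$, compute the Goldman bracket there, and push the result back. The key identification is the map $\phi:K\hat\pi \to K\wt\pi \oplus K\ut\pi$ given by $\alpha \mapsto \tfrac{1}{2}\wt\alpha + \tfrac{1}{2}\ut\alpha$, which is a Lie algebra isomorphism by construction. Under $\phi$, one has $\phi(\gamma+\iota\gamma) = \wt\gamma$ and $\phi(\gamma-\iota\gamma) = \ut\gamma$. So the three mixed brackets correspond respectively to applying $\phi$ to $[\alpha+\iota\alpha,\beta-\iota\beta]_G$, $[\alpha-\iota\alpha,\beta+\iota\beta]_G$, and $[\alpha-\iota\alpha,\beta-\iota\beta]_G$.

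The first step is to expand each of these four-term brackets using bilinearity and the relation $\iota[\alpha,\beta]_G = [\iota\alpha,\iota\beta]_G$. Setting $X = [\alpha,\beta]_G$ and $Y = [\alpha,\iota\beta]_G$ and noting $[\iota\alpha,\beta]_G = \iota Y$, the expansions become
\begin{align*}
[\alpha+\iota\alpha,\beta-\iota\beta]_G &= (X-\iota X) - (Y-\iota Y),\\
[\alpha-\iota\alpha,\beta+\iota\beta]_G &= (X-\iota X) + (Y-\iota Y),\\
[\alpha-\iota\alpha,\beta-\iota\beta]_G &= (X+\iota X) - (Y+\iota Y),
\end{align*}
which a priori land in $A_1$, $A_1$, $A_0$ respectively. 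Applying $\phi$ then gives $\ut X \mp \ut Y$ for the first two cases and $\wt X - \wt Y$ for the third.

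Next I would unpack $X$ and $Y$ using the Goldman formula. Since representatives of $\alpha$ and $\iota\beta$ have the same intersection set as representatives of $\alpha$ and $\beta$ but with signs reversed, one finds $Y = -\sum_{P\in\alpha\cap\beta}\varepsilon_P(\alpha,\beta)\,|\alpha_P\beta_P^{-1}|$, while $X = \sum_{P\in\alpha\cap\beta}\varepsilon_P(\alpha,\beta)\,|\alpha_P\beta_P|$. The crucial observation (from the definition of $(\alpha *_P\beta)_0$ and $(\alpha*_P\beta)_\infty$) is that when $\varepsilon_P(\alpha,\beta)=1$ one has $|\alpha_P\beta_P| = (\alpha*_P\beta)_0$ and $|\alpha_P\beta_P^{-1}| = (\alpha*_P\beta)_\infty$, and vice versa when $\varepsilon_P(\alpha,\beta)=-1$.

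Given this, a short case analysis on the sign of $\varepsilon_P$ at each intersection point collapses the formulas. For instance, in the $[\wt\alpha,\ut\beta]_{TWG} = \ut X - \ut Y$ case, both choices of sign yield the same contribution $\varepsilon_P(\alpha,\beta)(\ut{(\alpha*_P\beta)}_0 + \ut{(\alpha*_P\beta)}_\infty)$ at each $P$; in the $[\ut\alpha,\wt\beta]_{TWG} = \ut X + \ut Y$ case both yield $\ut{(\alpha*_P\beta)}_0 - \ut{(\alpha*_P\beta)}_\infty$; and in the $[\ut\alpha,\ut\beta]_{TWG} = \wt X - \wt Y$ case both yield $\varepsilon_P(\alpha,\beta)(\wt{(\alpha*_P\beta)}_0 + \wt{(\alpha*_P\beta)}_\infty)$. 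Summing over $P$ gives formulas (2.5), (2.6), (2.7) respectively. No step is genuinely hard; the only thing requiring care is keeping track of the signs and of the factor of $2$ hidden in the identifications $\alpha+\iota\alpha\leftrightarrow\wt\alpha$ and $\alpha-\iota\alpha\leftrightarrow\ut\alpha$, which is why $\tfrac{1}{2}\in K$ is needed.
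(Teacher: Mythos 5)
Your proof is correct and takes essentially the same route as the paper, which states this proposition without an explicit proof as an immediate consequence of transporting the Goldman bracket through the isomorphism $K\hat{\pi}=A_0\oplus A_1\cong K\wt{\pi}\oplus K\ut{\pi}$; your expansion of $[\alpha\pm\iota\alpha,\beta\mp\iota\beta]_G$ via $\iota[\alpha,\beta]_G=[\iota\alpha,\iota\beta]_G$ and the sign case analysis at each intersection point is exactly the verification being left to the reader. The bookkeeping of $\varepsilon_P$, the identity $\ut{\iota\gamma}=-\ut{\gamma}$, and the factor of $2$ hidden in the identifications $\alpha+\iota\alpha\leftrightarrow\wt{\alpha}$, $\alpha-\iota\alpha\leftrightarrow\ut{\alpha}$ all check out.
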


\section{Hyperbolic geometry and the Thurston-Wolpert-Goldman Lie algebra} \label{Hyperbolic geometry and the Thurston-Wolpert-Goldman Lie algebra}

In this section, we recall the Center Theorem of the Thurston-Wolpert-Goldman Lie algebra, due to Chas and Kabiraj \cite{Chas-Kabiraj}.
Denote by $Teich(S)$ the Teichm\"{u}ller space of the hyperbolic surface $S$. For a free homotopy class $\Al, \Al(X)$ denotes a geodesic representative of $\Al$ with respect to the hyperbolic metric $X\in Teich(S)$.

\subsection{Geometric intersection number for geodesic. }
In order to distinguish intersection points of multiplicity larger than two, we introduce another definition of
intersection point: If $a$ and $b$ are two closed curves intersecting transversally, an $(a, b)$-{\it intersection point} is a point $P$ on the intersection of $a$ and $b$, together with a choice of a pair of small arcs, one of $a$ and the other of $b$, intersecting only at $P$.

\begin{lemma} 
\cite[Lemma 2.2]{Chas-Kabiraj}
If $X_{1}, X_{2}\in Teich(S)$ are hyperbolic metrics on $S$ and $\Al, \Be \in \hat{\pi}$ are two free homotopy classes, then there is a canonical one to one correspondence between $(\Al(X_{1}), \Be(X_{1}))$-{\it intersection points} and $(\Al(X_{2}), \Be(X_{2}))$-{\it intersection points}. 
\end{lemma}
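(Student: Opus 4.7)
The plan is to lift the geodesic representatives to the universal cover $\mathbb{H}$ and encode intersection points purely group-theoretically, so that the parametrization is manifestly independent of the hyperbolic metric. For each $X \in Teich(S)$ the hyperbolic structure gives a discrete faithful representation $\rho_X : \pi_1(S) \to PSL(2,\mathbb{R})$, under which every nontrivial $g \in \pi_1(S)$ has a unique axis $\mathrm{ax}_X(g) \subset \mathbb{H}$. Fix once and for all primitive elements $\alpha_0, \beta_0 \in \pi_1(S)$ representing $\Al, \Be \in \hat\pi$. A standard covering-space argument identifies the $(\Al(X), \Be(X))$-intersection points with the double cosets
\[
\langle\alpha_0\rangle \backslash \mathcal{I}(X) / \langle\beta_0\rangle, \qquad \mathcal{I}(X) = \{\, g \in \pi_1(S) \mid \mathrm{ax}_X(\alpha_0) \cap \mathrm{ax}_X(g\beta_0 g^{-1}) \neq \emptyset \,\},
\]
with the evident modification in the case $\Al = \Be^{\pm 1}$, where one excludes $g \in \langle\alpha_0\rangle$ and quotients by a further $\mathbb{Z}/2$. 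The arc-data in the definition of intersection point is automatic here, since two distinct geodesics in $\mathbb{H}$ always cross transversally at a single point.

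The goal then reduces to showing that $\mathcal{I}(X)$ does not depend on $X$, for then the map that sends an intersection point to its double coset yields a canonical bijection between the set of $(\Al(X_1), \Be(X_1))$-intersection points and the set of $(\Al(X_2), \Be(X_2))$-intersection points. I would prove this $X$-independence by a connectedness argument. The Teichm\"uller space $Teich(S)$ is path-connected (in fact contractible), so it suffices to verify that, for each fixed $g \in \pi_1(S)$, the condition $g \in \mathcal{I}(X)$ is locally constant in $X$. The pairs of ideal endpoints of $\mathrm{ax}_X(\alpha_0)$ and $\mathrm{ax}_X(g\beta_0 g^{-1})$ on $\partial\mathbb{H}$ are precisely the fixed points of $\rho_X(\alpha_0)$ and $\rho_X(g\beta_0 g^{-1})$, and these depend continuously on $X$ through $\rho_X$. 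Whenever the two axes are distinct and have four distinct ideal endpoints, whether they cross is governed by the cyclic linking pattern of those four points on $\partial\mathbb{H}$, and is therefore locally constant in $X$.

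The main obstacle I expect is the degenerate case in which the two axes coincide or share exactly one ideal endpoint, since there the crossing/non-crossing dichotomy can jump. Both coincidence and endpoint-sharing in a discrete torsion-free Fuchsian group force $\alpha_0$ and $g\beta_0 g^{-1}$ to lie in a common maximal cyclic subgroup of $\pi_1(S)$, which is a purely group-theoretic condition on $(\alpha_0, \beta_0, g)$ that is the same for every $X$. Consequently such a $g$ either contributes to $\mathcal{I}(X)$ for all $X$ or for no $X$, or else it is handled by the self-intersection modification described above; in either case the membership $g \in \mathcal{I}(X)$ is genuinely constant in $X$. This removes the only potential discontinuity in the local-constancy argument, so that the $X$-independent set of double cosets parametrizes intersection points uniformly across $Teich(S)$ and the canonical one-to-one correspondence follows.
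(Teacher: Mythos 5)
The paper does not prove this lemma; it is imported verbatim from \cite[Lemma 2.2]{Chas-Kabiraj}, so there is no in-paper argument to compare against. Your proposal is correct and is essentially the standard proof: encode intersection points as double cosets $\langle\alpha_0\rangle\backslash\mathcal{I}(X)/\langle\beta_0\rangle$ of crossing lifts in $\mathbb{H}$, observe that crossing of two axes is equivalent to linking of their ideal endpoint pairs on $\partial\mathbb{H}$, and show this linking data is metric-independent. Your key steps all hold: the arc-data in the definition of an $(a,b)$-intersection point is indeed absorbed by the double coset (distinct geodesics in $\mathbb{H}$ cross at most once), and the degenerate cases are correctly dispatched, since in a discrete torsion-free Fuchsian group two hyperbolic elements sharing even one fixed point must share their axis and hence lie in a common maximal cyclic subgroup --- a purely algebraic condition independent of $X$. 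The one place where your route differs from the usual one in the literature is the final $X$-independence step: you use path-connectedness of $Teich(S)$ plus local constancy of the linking condition, whereas the source argument typically invokes the canonical $\pi_1(S)$-equivariant homeomorphism between the boundary circles of any two Fuchsian uniformizations, which transports linked endpoint pairs to linked endpoint pairs directly. Your connectedness argument is more elementary (it needs only continuity of fixed points in the representation) at the cost of having to rule out the degenerate locus separately, which you do; the boundary-conjugacy argument avoids that case analysis but requires a deeper input. Either way the correspondence you construct is canonical in the required sense, so the proposal stands.
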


\subsection{Angles and earthquakes. }

For each $(\Al(X), \Be(X))$-intersection point $P$, {\it the X angle of $\Al$ and $\Be$ at $P$}, denoted by $\theta_{P}(X)$, is defined as the angle of $\Al(X)$ and $\Be(X)$ at $P$, measured from $\Be(X)$ to $\Al(X)$ by using the orientation of $S$.

\begin{figure}[H]
\label{Angle}
\centering\includegraphics[width=0.4\linewidth]{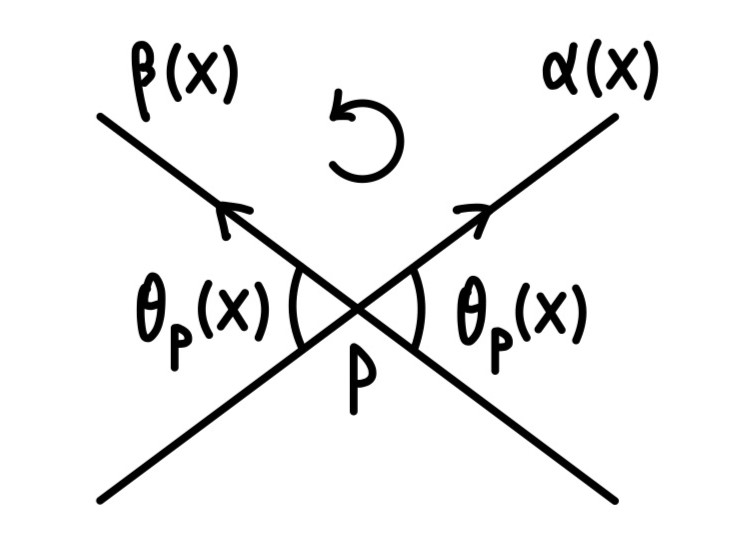}
\caption{\label{fig.angle_by_metric}The angle $\theta_{P}(X)$}
\end{figure}

\noindent
In particular, since $P$ is a transversal intersection point, $\theta_{P}(X)$ $\in$ $(0,\pi)$.
Following \cite{Kerckhoff1983} for each $X$ $\in$ $Teich(S)$ and each simple $\Al$ $\in$ $\hat{\pi}$ and each real number $t$, $S_{\Al(X)}(t)$ $\in$ $Teich(S)$ is given by left twist deformation of $X$ along $\Al(X)$ at the time $t$ starting at $X$. By \cite[Lemma 2.1]{Kabiraj2018} we get

\begin{lemma} \label{lem twist}
\cite[Lemma 2.4]{Chas-Kabiraj}
If $X$ $\in$ $Teich(S)$ and $\Al$, $\Be$ $\in$ $\hat{\pi}$ such that $\Al$ is simple, and $P$ is an $(\Al(X), \Be(X))$-intersection point, then $\theta_{P}(S_{\Al(X)}(t))$ $\in$ $Teich(S)$ is a strictly decreasing function with respect to $t$.
\end{lemma}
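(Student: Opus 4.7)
The plan is to prove strict monotonicity by passing to the universal cover $\mathbb{H}$ and computing the angle as an explicit function of $t$. Normalize the developing map of $(S,X)$ so that a chosen lift $L_{0}$ of $\Al(X)$ is the imaginary axis, and lift $P$ to $\tilde{P} \in L_{0}$. Write $\tilde{\Be}$ for the lift of $\Be(X)$ through $\tilde{P}$, which is a Euclidean semicircle with endpoints $a_{-} < 0 < a_{+}$ on $\partial \mathbb{H}$. Since the hyperbolic metric on $\mathbb{H}$ is conformal to the Euclidean metric, an elementary computation on the semicircle gives
\begin{equation*}
\cos \theta_{P}(X) \;=\; \frac{a_{+} + a_{-}}{a_{+} - a_{-}}.
\end{equation*}

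For the effect of the left earthquake $X_{t} := S_{\Al(X)}(t)$, I would analyze the deformation of the Fuchsian representation $\rho : \pi_{1}(S) \to \mathrm{PSL}(2, \mathbb{R})$: generators of $\pi_{1}(S)$ disjoint from $\Al$ are unchanged, while those crossing $\Al$ are modified by inserting the translation $T_{t} : z \mapsto e^{t} z$ along $L_{0}$ (and its conjugates at other lifts of $\Al$). Using the canonical correspondence of intersection points from Lemma 3.1, the lift $\tilde{\Be}_{t}$ of $\Be(X_{t})$ associated to $P$ is a Euclidean semicircle in $\mathbb{H}$ with endpoints $a_{-}(t) < 0 < a_{+}(t)$ depending smoothly on $t$, with $a_{\pm}(0) = a_{\pm}$.

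Differentiating the angle formula at time $t$ and using $a_{+}(t) - a_{-}(t) > 0$, it suffices to prove
\begin{equation*}
a_{+}(t)\, a_{-}'(t) - a_{-}(t)\, a_{+}'(t) \;>\; 0,
\end{equation*}
which would give $\frac{d}{dt} \cos \theta_{P}(X_{t}) > 0$ and hence the desired strict decrease of $\theta_{P}(X_{t}) \in (0,\pi)$. In the model case where $\Be$ crosses $\Al$ exactly once, after choosing the lift to normalize $a_{-}(t) \equiv a_{-}$ and $a_{+}(t) = e^{t} a_{+}$, the expression becomes $-a_{-}\, e^{t}\, a_{+} > 0$ since $a_{-} < 0 < a_{+}$, and monotonicity is immediate.

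The main obstacle is extending the positivity to the case where $\Be$ crosses $\Al$ several times; then $\rho_{t}(\Be)$ is a product of several factors alternating between blocks from $\rho_{0}$ and powers of $T_{t}$, and both endpoints $a_{\pm}(t)$ move in ways that require careful tracking. The resolution is a matrix computation in the spirit of \cite[Lemma 2.1]{Kabiraj2018}: one expresses the endpoints of the new axis explicitly in terms of the entries of $\rho_{0}(\Be)$ and the factors of $T_{t}$, then shows that each crossing contributes to the numerator with a consistent sign. The combinatorial accounting of all crossings, together with the observation that the twist at $\tilde{P}$ itself dominates the expression in the correct direction, yields the strict positivity and completes the proof.
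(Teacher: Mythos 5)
The paper does not actually prove this lemma; it is quoted from \cite[Lemma 2.4]{Chas-Kabiraj}, which in turn rests on \cite[Lemma 2.1]{Kabiraj2018}, and that result is essentially Kerckhoff's computation of the variation of intersection angles along earthquake paths. Your strategy --- pass to $\mathbb{H}$, express $\cos\theta_{P}$ through the ideal endpoints $a_{\pm}(t)$ of the lift of $\beta(X_t)$ through $\tilde{P}$, and differentiate --- is exactly the right framework, and the formula $\cos\theta_{P}=\frac{a_{+}+a_{-}}{a_{+}-a_{-}}$ when the lift of $\alpha$ is the imaginary axis is correct. But the write-up has a genuine gap precisely where the lemma has its content.

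First, the ``model case'' is not a correct base case. The relevant lift of $\beta(X_t)$ is the axis of the deformed holonomy $\rho_{t}(\beta)$, and the full preimage of $\alpha(X)$ in $\mathbb{H}$ is an infinite disjoint family of geodesics which this axis crosses infinitely often (one crossing per period for each intersection point); there is no situation in which the twist is inserted at a single lift only. Consequently both fixed points of $\rho_{t}(\beta)=E_{t}\rho_{0}(\beta)$ move with $t$, and no choice of lift normalizes $a_{-}(t)\equiv a_{-}$, $a_{+}(t)=e^{t}a_{+}$; that normalization describes the two rays of the \emph{broken} (piecewise-geodesic) image of $\beta$ near $\tilde{P}$, not the endpoints of the geodesic representative whose angle you are measuring. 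Second, in the genuinely multi-crossing situation you reduce everything to the inequality $a_{+}a_{-}'-a_{-}a_{+}'>0$ and assert that ``each crossing contributes to the numerator with a consistent sign.'' That assertion is the entire difficulty: the derivative of $\cos\theta_{P}$ is a sum over all lifts of $\alpha$ met by the axis of $\beta$, and showing every summand has the same sign requires the explicit cross-ratio/endpoint estimates of Kerckhoff (reproduced in \cite[Lemma 2.1]{Kabiraj2018}), not just ``careful tracking.'' As it stands the proposal proves the lemma in no case and sketches the hard case by fiat; to complete it you would need to either carry out that summation with the sign analysis, or simply cite \cite{Kabiraj2018} as the paper does.
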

 
For each hyperbolic isometries $g$ of the hyperbolic plane, let $A_{g}$ be the axis of $g$ and $t_{g}$ the translation length of $g$.

\begin{thm} \label{thm cosh}
\cite[Theorem 7.38.6]{Beardon1983}
Let $g$ and $h$ be hyperbolic isometries of the hyperbolic plane and suppose that $A_{g}$ and $A_{h}$ intersect at a point $P$. Denote by $\theta$ the angle at P between forward direction of $A_{g}$ and $A_{h}$. Then the product $gh$ is hyperbolic and 
\begin{align}
\cosh\left(\frac{t_{gh}}{2}\right) = \cosh\left(\frac{t_{g}}{2}\right)\cosh\left(\frac{t_{h}}{2}\right) + \sinh\left(\frac{t_{g}}{2}\right)\sinh\left(\frac{t_{h}}{2}\right)\cos\theta.
\end{align}

\begin{figure}[H]
\centering\includegraphics[width=0.5\linewidth]{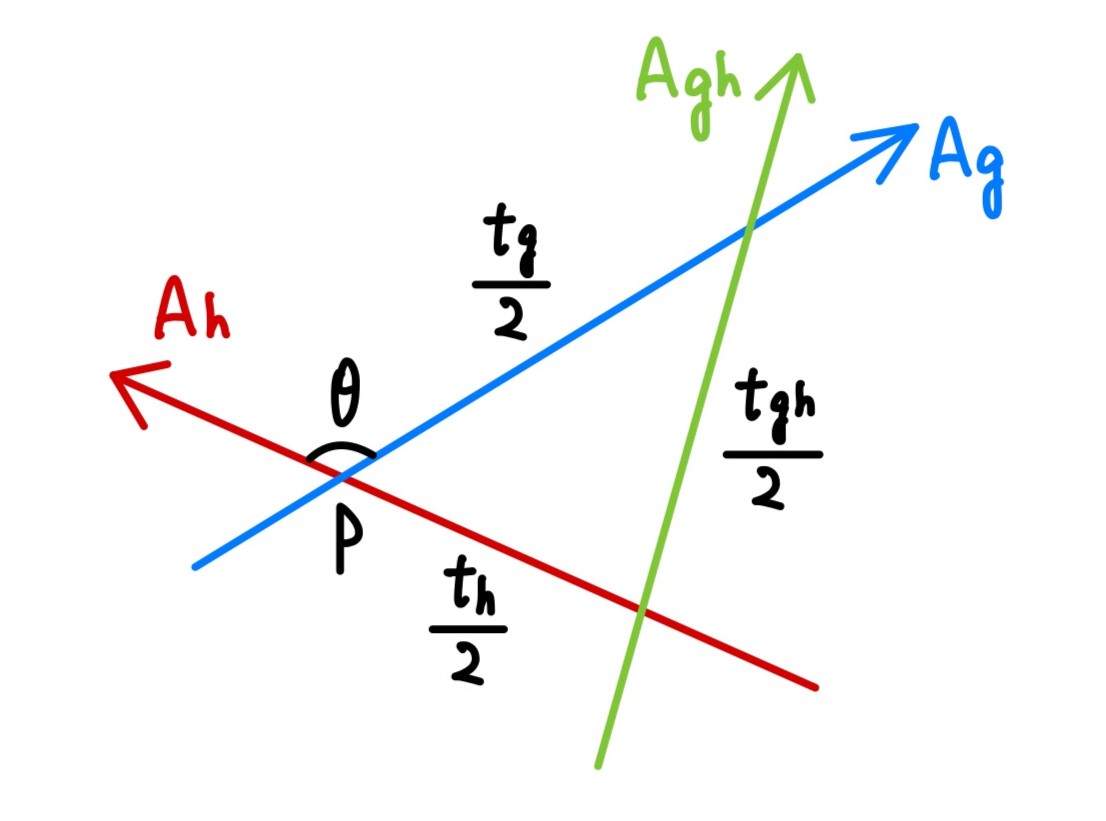}
\caption{\label{fig.cosh}Theorem \ref{thm cosh}}
\end{figure}

\end{thm}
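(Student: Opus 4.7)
The plan is to prove the formula by a direct matrix computation in $SL(2,\mathbb{R})$. The key fact I would rely on is the standard trace identity: a hyperbolic element $M\in PSL(2,\mathbb{R})$ with translation length $t$ has $|\operatorname{tr}(\widetilde{M})|=2\cosh(t/2)$ for any lift $\widetilde{M}\in SL(2,\mathbb{R})$. So once I can compute $\operatorname{tr}(gh)$ in a convenient normalization, the theorem will follow.

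Since $PSL(2,\mathbb{R})$ acts transitively on the unit tangent bundle of $\mathbb{H}$, I would first conjugate so that $P$ corresponds to $i$ in the upper half-plane model and the forward direction of $A_g$ at $P$ points along the positive imaginary axis. In this normalization, a lift of $g$ to $SL(2,\mathbb{R})$ is the diagonal matrix $\operatorname{diag}(e^{t_g/2},e^{-t_g/2})$, whose axis is the imaginary axis with the correct translation direction. Since $A_h$ is obtained from $A_g$ by rotation around $i$ through angle $\theta$, a lift of $h$ has the form $R_\theta\operatorname{diag}(e^{t_h/2},e^{-t_h/2})R_\theta^{-1}$, where $R_\theta = \bigl(\begin{smallmatrix}\cos(\theta/2) & \sin(\theta/2)\\ -\sin(\theta/2) & \cos(\theta/2)\end{smallmatrix}\bigr)$. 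Computing the derivative of the associated M\"obius transformation at $i$ gives $R_\theta'(i)=e^{i\theta}$, which confirms that $R_\theta$ really does act on the tangent space at $P$ as rotation by exactly the geometric angle $\theta$.

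Expanding the product $gh$ and reading off the trace is then a routine calculation that yields
\begin{align*}
\operatorname{tr}(gh) \;=\; 2\cosh(t_g/2)\cosh(t_h/2) + 2\sinh(t_g/2)\sinh(t_h/2)\cos\theta.
\end{align*}
Because $A_g$ and $A_h$ intersect transversely at a single point, we have $\theta\in(0,\pi)$ and hence $\cos\theta>-1$, so $\operatorname{tr}(gh) > 2\cosh((t_g-t_h)/2)\geq 2$. This forces $gh$ to be hyperbolic, and the trace identity $2\cosh(t_{gh}/2)=\operatorname{tr}(gh)$ then gives the desired cosh formula. The only delicate point in the whole argument is matching the rotation parameter $\theta$ in $R_\theta$ with the geometric angle between the forward directions of the two axes; the derivative computation $R_\theta'(i)=e^{i\theta}$ handles this, and since the final formula is even in $\theta$, any residual sign ambiguity is inconsequential.
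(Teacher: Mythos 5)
The paper does not prove this statement at all: it is imported verbatim as \cite[Theorem 7.38.6]{Beardon1983}, so there is no internal proof to compare against. Your argument is a correct, self-contained proof by a different (and more computational) route than Beardon's, who derives such identities from his general machinery of expressing isometries as products of reflections in geodesics and computing inversive products. Your normalization is sound: with $g=\operatorname{diag}(e^{t_g/2},e^{-t_g/2})$ and $h=R_\theta\operatorname{diag}(e^{t_h/2},e^{-t_h/2})R_\theta^{-1}$ one gets
\begin{equation*}
\operatorname{tr}(gh)=2\cos^2(\tfrac{\theta}{2})\cosh\bigl(\tfrac{t_g+t_h}{2}\bigr)+2\sin^2(\tfrac{\theta}{2})\cosh\bigl(\tfrac{t_g-t_h}{2}\bigr)
=2\cosh(\tfrac{t_g}{2})\cosh(\tfrac{t_h}{2})+2\sinh(\tfrac{t_g}{2})\sinh(\tfrac{t_h}{2})\cos\theta,
\end{equation*}
exactly as you claim, and the bound $\operatorname{tr}(gh)>2\cosh(\tfrac{t_g-t_h}{2})\geq 2$ for $\theta\in(0,\pi)$ correctly forces $gh$ to be hyperbolic before you invoke the trace--translation-length identity. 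You also correctly isolate and resolve the one genuinely delicate point: the derivative computation $R_\theta'(i)=e^{i\theta}$ (note the half-angle in the matrix entries, since $PSO(2)$ double-covers the rotation of the tangent space) guarantees that the parameter $\theta$ is the angle between the \emph{forward} directions of the two axes rather than its supplement, which is what fixes the sign of the $\cos\theta$ term; the residual ambiguity $\theta\mapsto-\theta$ is indeed harmless by evenness. What your approach buys is a short, verifiable computation requiring only the trace identity $|\operatorname{tr}|=2\cosh(t/2)$; what the citation to Beardon buys the paper is consistency with the broader family of trace relations (e.g.\ the variants used implicitly in Lemma 3.5 of the paper) developed there in a uniform framework.
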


For each hyperbolic metric $X$ and each $\Al$ $\in$ $\hat{\pi}$, denote by $\ell_{\Al(X)}$ the length of $\Al(X)$.
By using this theorem, the next lemma follows immediately.

\begin{lemma} \label{Lem 2.7}
\cite[Lemma 2.7]{Chas-Kabiraj}
Let $X$ be a hyperbolic metric on $S$ and $\Al$, $\Be$ $\in$ $\hat{\pi}$ be free homotopy classes. If $P$ is an ($\Al(X)$, $\Be(X)$)-intersection point, then for any metric $Y$ $\in$ $Teich(S)$, we have 
\begin{align*}
\cosh\left(\frac{\ell_{(\alpha*_{P}\beta)_{0}(Y)}}{2}\right) = \cosh\left(\frac{\ell_{\Al(Y)}}{2}\right)\cosh\left(\frac{\ell_{\Be(Y)}}{2}\right) - \sinh\left(\frac{\ell_{\Al(Y)}}{2}\right)\sinh\left(\frac{\ell_{\Be(Y)}}{2}\right)\cos\theta_{P}(Y),\\
\cosh\left(\frac{\ell_{(\alpha*_{P}\beta)_{\infty}(Y)}}{2}\right) = \cosh\left(\frac{\ell_{\Al(Y)}}{2}\right)\cosh\left(\frac{\ell_{\Be(Y)}}{2}\right) + \sinh\left(\frac{\ell_{\Al(Y)}}{2}\right)\sinh\left(\frac{\ell_{\Be(Y)}}{2}\right)\cos\theta_{P}(Y).
\end{align*}

\end{lemma}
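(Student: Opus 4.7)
The plan is to derive the identities by lifting the configuration to the universal cover and applying Theorem~\ref{thm cosh}. Fix $Y\in Teich(S)$ and realize $S$ as $\mathbb{H}/\Gamma$ for a Fuchsian group $\Gamma$. For the metric $Y$, the geodesic representatives $\alpha(Y)$ and $\beta(Y)$ pass through $P$, so I would choose a lift $\tilde P\in\mathbb{H}$ of $P$ and let $g,h\in\Gamma$ be the deck transformations whose axes $A_g,A_h$ both pass through $\tilde P$ and project to $\alpha(Y)$ and $\beta(Y)$ with the prescribed orientations. Then $t_g=\ell_{\alpha(Y)}$ and $t_h=\ell_{\beta(Y)}$.

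Next I would identify the loop products in group-theoretic terms. The concatenated loop $\alpha_P\beta_P$ is freely homotopic to the conjugacy class of $gh$, so its geodesic length is $t_{gh}$; likewise $\alpha_P\beta^{-1}_P$ corresponds to $gh^{-1}$, whose axis is $A_h$ with reversed forward direction and whose translation length is $t_{gh^{-1}}$. By the definition
\[
(\alpha *_{P}\beta)_{0}=|\alpha_P\beta^{\varepsilon_P(\alpha,\beta)}_P|,\qquad (\alpha *_{P}\beta)_{\infty}=|\alpha_P\beta^{-\varepsilon_P(\alpha,\beta)}_P|,
\]
these two free homotopy classes have geodesic lengths equal to the translation lengths of $gh^{\varepsilon_P(\alpha,\beta)}$ and $gh^{-\varepsilon_P(\alpha,\beta)}$ respectively. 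Applying Theorem~\ref{thm cosh} to each of these products then yields the required $\cosh$ formulas up to a sign in front of $\cos\theta_P(Y)$.

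The remaining work is to pin down that sign, and this is the step I expect to be the main obstacle. In Theorem~\ref{thm cosh} the angle $\theta$ is measured between the \emph{forward} directions of the axes, whereas $\theta_P(Y)$ is the angle measured from $\beta(Y)$ to $\alpha(Y)$ via the orientation of $S$. I would do the analysis separately in the two cases $\varepsilon_P(\alpha,\beta)=\pm 1$: in each case, one checks that raising $h$ to the power $\varepsilon_P(\alpha,\beta)$ (which corresponds to $(\alpha *_P\beta)_0$) flips the forward direction of $A_h$ precisely when necessary so that the Beardon angle between the forward axes becomes $\pi-\theta_P(Y)$, producing the minus sign $-\cos\theta_P(Y)$ in the first identity. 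For $(\alpha *_P\beta)_\infty$ the opposite exponent is used, so the Beardon angle is $\theta_P(Y)$ itself, producing $+\cos\theta_P(Y)$ in the second identity.

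Putting these pieces together, Beardon's formula directly gives the two $\cosh$ identities claimed in the lemma. Note that the choice of lift $\tilde P$ is immaterial, since conjugate elements of $\Gamma$ have equal translation lengths, so the length data depend only on the $(\alpha(Y),\beta(Y))$-intersection point $P$ via the canonical correspondence from Lemma~2.2 of \cite{Chas-Kabiraj}.
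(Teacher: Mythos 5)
Your proposal is correct and follows exactly the route the paper intends: the paper gives no written proof beyond the remark that the lemma ``follows immediately'' from Theorem~\ref{thm cosh}, and your argument---lifting to $\mathbb{H}$, taking deck transformations $g,h$ with axes through a lift of $P$, identifying $(\alpha*_P\beta)_0$ and $(\alpha*_P\beta)_\infty$ with the conjugacy classes of $gh^{\varepsilon_P(\alpha,\beta)}$ and $gh^{-\varepsilon_P(\alpha,\beta)}$, and applying Beardon's formula while tracking whether the angle between forward directions is $\theta_P(Y)$ or $\pi-\theta_P(Y)$---is precisely that application spelled out. The sign bookkeeping you flag is indeed the only nontrivial point, and your resolution of it is the standard and correct one.
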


The angles at $P$ of $\Al(X)$ and $\Be(X)$ and of $\Al^m(X)$ and $\Be(X)$ are the same angle. Therefore, we consider $P$ as an $(\Al^m, \Be)$-intersection point, and the angle $\theta_P(X)$ also denotes the angle at $P$ measured from $\Be(X)$ to $\Al^m(X)$.

\begin{lemma} \label{Prop 3.4}
\cite[Proposition 3.4]{Chas-Kabiraj}
Let $X \in Teich(S)$ be a hyperbolic metric on $S$ and $\Al, \Be_{1}, \Be_{2} \in \hat{\pi}$ such that $\Al$ is simple and $\Al(X)$ and $\Be_{i}(X)$ are intersecting transversally for $i=1,2$. Let $P$ and $Q$ be $(\Al(X), \Be_{1}(X))$ and $(\Al(X), \Be_{2}(X))$-intersection points respectively. The following holds:

(1) The equality $\wt{(\alpha^m*_{P}\Be_{1})}_{0}=\wt{(\alpha^m*_{Q}\Be_{2})}_\infty$ holds for at most one positive value of $m$.

(2) Either the equality $\wt{(\alpha^m*_{P}\Be_{1})}_{0}=\wt{(\alpha^m*_{Q}\Be_{2})}_{0}$ (resp. $\wt{(\alpha^m*_{P}\Be_{1})}_\infty=\wt{(\alpha^m*_{Q}\Be_{2})}_\infty$) holds for at most one positive value of $m$, or there exists an $(\Al(X), \Be_{1}(X))$-intersection point $R$ such that $\theta_P(X) < \theta_R(X)$.
\end{lemma}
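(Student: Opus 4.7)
The proof of both parts proceeds by converting the claimed equality of unoriented free homotopy classes into a $\cosh$-identity on all of $Teich(S)$ via Lemma \ref{Lem 2.7} (applied with $\alpha$ replaced by $\alpha^m$, whose geodesic length is $m\ell_{\alpha(Y)}$ and whose intersection angles at $P,Q$ with $\beta_i$ coincide with $\theta_P(Y),\theta_Q(Y)$), and then exploiting two distinct values of $m$: the two resulting identities can be reconciled only by a rigid constraint on the lengths $\ell_{\beta_i(Y)}$ and on the angles $\theta_P(Y),\theta_Q(Y)$, and this constraint will either be ruled out directly by the earthquake monotonicity of Lemma \ref{lem twist}, or will yield the desired intersection point $R$.

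For (1), the equality $\wt{(\alpha^m*_P\beta_1)}_0=\wt{(\alpha^m*_Q\beta_2)}_\infty$ forces the two length functions on $Teich(S)$ to agree, and Lemma \ref{Lem 2.7} rewrites this as
\begin{equation*}
\cosh\tfrac{m\ell_{\alpha(Y)}}{2}\,f(Y)\;=\;\sinh\tfrac{m\ell_{\alpha(Y)}}{2}\,g(Y)\qquad\text{for all }Y\in Teich(S),
\end{equation*}
where $f(Y)=\cosh\tfrac{\ell_{\beta_1(Y)}}{2}-\cosh\tfrac{\ell_{\beta_2(Y)}}{2}$ and $g(Y)=\sinh\tfrac{\ell_{\beta_1(Y)}}{2}\cos\theta_P(Y)+\sinh\tfrac{\ell_{\beta_2(Y)}}{2}\cos\theta_Q(Y)$. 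Since the hyperbolic factors are strictly positive, at each $Y$ one has $f(Y)=0\iff g(Y)=0$. If two distinct positive $m_1,m_2$ both satisfy the identity, then wherever $f(Y)g(Y)\ne 0$ we would obtain $\coth\tfrac{m_1\ell_{\alpha(Y)}}{2}=g(Y)/f(Y)=\coth\tfrac{m_2\ell_{\alpha(Y)}}{2}$, contradicting the strict monotonicity of $\coth$ on $(0,\infty)$. Hence $f\equiv g\equiv 0$ on $Teich(S)$, giving $\ell_{\beta_1(Y)}=\ell_{\beta_2(Y)}$ and, after substitution, $\cos\theta_P(Y)+\cos\theta_Q(Y)=0$, i.e.\ $\theta_P(Y)+\theta_Q(Y)=\pi$ for all $Y$. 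But Lemma \ref{lem twist} applied to the simple curve $\alpha$ asserts that along the left earthquake flow $t\mapsto S_{\alpha(X)}(t)$ both $\theta_P$ and $\theta_Q$ are strictly decreasing functions of $t$, so their sum cannot remain constantly equal to $\pi$. This contradiction proves (1).

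For (2), the same argument applied to $\wt{(\alpha^m*_P\beta_1)}_0=\wt{(\alpha^m*_Q\beta_2)}_0$ (and, mutatis mutandis with an overall sign flip, to the $\infty=\infty$ case) changes only the sign of the second summand of $g(Y)$, and two distinct values of $m$ now force $\ell_{\beta_1(Y)}=\ell_{\beta_2(Y)}$ and $\theta_P(Y)=\theta_Q(Y)$ for all $Y\in Teich(S)$. The main obstacle is then the final step: producing an $(\alpha(X),\beta_1(X))$-intersection point $R$ with $\theta_P(X)<\theta_R(X)$. My plan is to argue by contradiction: assume no such $R$ exists, so that $\theta_P(X)$ is the maximal angle of intersection between $\alpha(X)$ and $\beta_1(X)$. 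Lifting the configuration to the universal cover $\mathbb{H}$ and combining the derived rigidity $\theta_P(Y)=\theta_Q(Y)$ and $\ell_{\beta_1(Y)}=\ell_{\beta_2(Y)}$ with the earthquake monotonicity of Lemma \ref{lem twist}, I would compare the fellow-travelling behaviour of the lifts of $\beta_1$ and of $\beta_2$ along a fixed lift of $\alpha$ and argue that the exact identification of angle functions forces $\beta_1$ itself to have a further intersection with $\alpha$ of strictly larger angle than $\theta_P(X)$, contradicting the assumed maximality. This geometric rigidity step is where I expect the argument for (2) to be the most delicate.
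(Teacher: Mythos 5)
The paper does not actually prove this lemma: it is imported verbatim from Chas--Kabiraj \cite[Proposition 3.4]{Chas-Kabiraj} (a sketch of the relevant universal-cover construction survives only in commented-out source). So your proposal can only be judged on its own merits, not against an in-paper argument.

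Part (1) of your argument is complete and correct, and it is the standard one: equality of free homotopy classes gives equality of length functions on all of $Teich(S)$, Lemma \ref{Lem 2.7} (with $\ell_{\alpha^m}=m\ell_\alpha$ and the observation that $\alpha$ and $\alpha^m$ make the same angle at $P$) converts this into $\cosh(\tfrac{m\ell_\alpha}{2})f=\sinh(\tfrac{m\ell_\alpha}{2})g$, two values of $m$ force $f\equiv g\equiv 0$ by monotonicity of $\coth$, and the resulting relation $\theta_P(Y)+\theta_Q(Y)=\pi$ is destroyed by the strict decrease of both angles along the earthquake path of Lemma \ref{lem twist}.

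Part (2) has a genuine gap. The reduction to $\ell_{\beta_1(Y)}=\ell_{\beta_2(Y)}$ and $\theta_P(Y)=\theta_Q(Y)$ for all $Y$ is fine, but these conclusions are not self-contradictory (indeed they hold trivially when $\beta_1=\beta_2$ and $P=Q$, in which case the equality in (2) holds for \emph{every} $m$ and no point $R$ with a strictly larger angle need exist -- so any correct proof must at some point use that the two terms being compared are genuinely distinct). The entire content of alternative (2) is therefore the production of the intersection point $R$ with $\theta_P(X)<\theta_R(X)$, and for that you offer only a plan (``I would compare the fellow-travelling behaviour \dots and argue that \dots''), not an argument. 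This is exactly the delicate step in Chas--Kabiraj: one lifts to $\mathbb{H}$, uses that $(\alpha^m*_P\beta_1)_0$ and $(\alpha^m*_Q\beta_2)_0$ are represented by conjugate hyperbolic elements to identify their axes up to deck transformation, and extracts from the resulting ``zigzag'' of lifts of $\alpha$ and $\beta_i$ a new $(\alpha(X),\beta_1(X))$-intersection point whose angle strictly exceeds $\theta_P(X)$. Until that construction is carried out, part (2) -- and hence the lemma -- is not proved.
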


\begin{lemma} \label{Lem for type3}
Let $X \in Teich(S)$ be a hyperbolic metric on $S$ and $\Al, \Be, \Ga \in \hat{\pi}$ be free homotopy classes such that $\Al(X)$ and $\Be(X)$ are intersecting transversally. If $P$ is an $(\Al(X), \Be(X))$-intersection point, then the equality $\wt{(\alpha^m*_{P}\beta)}_{0}=\wt{\Ga}$ (resp. $\wt{(\alpha^m*_{P}\beta)}_{\infty}=\wt{\Ga}$) holds for at most two positive values of $m$.
\end{lemma}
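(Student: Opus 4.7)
My plan is to reduce the statement to a convexity argument via the trace/length identity in Lemma \ref{Lem 2.7}. Suppose $m$ is a positive integer with $\wt{(\alpha^m*_{P}\beta)}_{0}=\wt{\Ga}$. Because this is an equality of free homotopy classes, the geodesic lengths agree in every metric $Y\in Teich(S)$. Viewing $P$ as an $(\Al^m(X),\Be(X))$-intersection point and applying Lemma \ref{Lem 2.7}, this forces
\begin{equation*}
\cosh\!\left(\frac{\ell_{\Ga(Y)}}{2}\right)=\cosh\!\left(\frac{m\ell_{\Al(Y)}}{2}\right)\cosh\!\left(\frac{\ell_{\Be(Y)}}{2}\right)-\sinh\!\left(\frac{m\ell_{\Al(Y)}}{2}\right)\sinh\!\left(\frac{\ell_{\Be(Y)}}{2}\right)\cos\theta_{P}(Y).
\end{equation*}

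Fix any $Y$ and set $a=\ell_{\Al(Y)}/2>0$, $b=\ell_{\Be(Y)}/2>0$, $c=\ell_{\Ga(Y)}/2\geq 0$, $\theta=\theta_{P}(Y)\in(0,\pi)$ (the strict positivity of $a$ and $b$ follows from the transversality of $\Al(X)$ and $\Be(X)$, which forces both classes to have essential, interior geodesic representatives of positive length). Expanding the hyperbolic functions in exponentials, the identity above is equivalent to
\begin{equation*}
f(m)\coloneqq \tfrac{1}{2}\bigl(Ae^{ma}+Be^{-ma}\bigr)=\cosh c,
\end{equation*}
where $A=\cosh b-\sinh b\cos\theta$ and $B=\cosh b+\sinh b\cos\theta$. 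Since $\lvert\cos\theta\rvert<1$ and $b>0$, one has $A\geq \cosh b-\sinh b=e^{-b}>0$ and similarly $B>0$. Thus $f''(m)=\tfrac{a^{2}}{2}(Ae^{ma}+Be^{-ma})>0$, so $f$ is strictly convex on $\R$ and the equation $f(m)=\cosh c$ has at most two real solutions. In particular it has at most two positive integer solutions.

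For the second statement, the same argument applies to the length formula for $(\alpha^{m}*_{P}\beta)_\infty$: the only change is that $-\cos\theta$ becomes $+\cos\theta$, which swaps the roles of $A$ and $B$ but keeps both coefficients strictly positive, so $f$ is again strictly convex and admits at most two solutions. I do not foresee a serious obstacle: the only subtlety is ensuring $A,B>0$, which fails only if $b=0$ or $\lvert\cos\theta\rvert=1$, and both are ruled out by the transversal intersection hypothesis.
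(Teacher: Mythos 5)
Your proof is correct and follows essentially the same route as the paper's: both reduce via Lemma \ref{Lem 2.7} to the observation that the function $x\mapsto\cosh(x)\cosh\bigl(\tfrac{\ell_{\Be}}{2}\bigr)-\sinh(x)\sinh\bigl(\tfrac{\ell_{\Be}}{2}\bigr)\cos\theta_{P}$ takes each value at most twice on the positive reals. Your explicit rewriting as $\tfrac12\bigl(Ae^{ma}+Be^{-ma}\bigr)$ with $A,B>0$ and the strict-convexity argument merely supply the justification that the paper asserts without proof.
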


\begin{proof}
We argue by contradiction. Suppose that there are three distinct positive integers $m_{1}<m_{2}<m_{3}$ such that 
\begin{align*}
\wt{(\alpha^{m_{1}}*_{P}\beta)}_{0}=\wt{(\alpha^{m_{2}}*_{P}\beta)}_{0}=\wt{(\alpha^{m_{3}}*_{P}\beta)}_{0}=\wt{\Ga}.
\end{align*}
\noindent
To simplify notation, we will not write the dependence on $X$ (for example, we
will write $\cos\theta_{P}$ instead of $\cos\theta_{P}(X)$)
By Lemma \ref{Lem 2.7}, we have
\begin{align*}
&\cosh\left(\frac{\ell_{\Al^{m_{1}}}}{2}\right)\cosh\left(\frac{\ell_{\Be}}{2}\right) - \sinh\left(\frac{\ell_{\Al^{m_{1}}}}{2}\right)\sinh\left(\frac{\ell_{\Be}}{2}\right)\cos\theta_{P}\\
=&\cosh\left(\frac{\ell_{\Al^{m_{2}}}}{2}\right)\cosh\left(\frac{\ell_{\Be}}{2}\right) - \sinh\left(\frac{\ell_{\Al^{m_{2}}}}{2}\right)\sinh\left(\frac{\ell_{\Be}}{2}\right)\cos\theta_{P}\\
=&\cosh\left(\frac{\ell_{\Al^{m_{3}}}}{2}\right)\cosh\left(\frac{\ell_{\Be}}{2}\right) - \sinh\left(\frac{\ell_{\Al^{m_{3}}}}{2}\right)\sinh\left(\frac{\ell_{\Be}}{2}\right)\cos\theta_{P}.
\end{align*}

On the other hand, for any real number $c$, the equation
\begin{align*}
\cosh\left(x\right)\cosh\left(\frac{\ell_{\Be}}{2}\right) - \sinh\left(x\right)\sinh\left(\frac{\ell_{\Be}}{2}\right)\cos\theta_{P}=c
\end{align*}
\noindent
has at most two positive real solutions. Since $\ell_{\Al^{m_{1}}}<\ell_{\Al^{m_{2}}}<\ell_{\Al^{m_{3}}}$, we get a contradiction.
\end{proof}

\begin{lemma} \label{Lem for type4}
Let $X \in Teich(S)$ be a hyperbolic metric on $S$ and $\Al, \Be \in \hat{\pi}$ be free homotopy classes such that $\Al(X)$ and $\Be(X)$ are intersecting transversally. If $P$ is an $(\Al(X), \Be(X))$-intersection point, then the equality $\wt{(\alpha^m*_{P}\beta)}_{0}=\wt{\alpha^m}$ (resp. $\wt{(\alpha^m*_{P}\beta)}_{\infty}=\wt{\alpha^m}$) holds for at most one positive value of $m$.
\end{lemma}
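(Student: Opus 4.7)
The plan is to use Lemma \ref{Lem 2.7} to turn the equality $\wt{(\alpha^m*_P\beta)}_0 = \wt{\alpha^m}$ into a transcendental equation in the real variable $x := m\ell_{\alpha(X)}/2$, and then to show that this equation admits at most one positive real solution $x$, hence at most one positive integer $m$. This is analogous to the proof of Lemma \ref{Lem for type3}, but the key simplification (which cuts the bound from two down to one) comes from the fact that the left-hand side of Lemma \ref{Lem 2.7} now also depends on $m$ through $\ell_{\alpha^m(X)} = m\ell_{\alpha(X)}$.

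First I would note that if $\wt{(\alpha^m*_P\beta)}_0 = \wt{\alpha^m}$, then the two free homotopy classes have the same $X$-geodesic length, so $\ell_{(\alpha^m*_P\beta)_0(X)} = m\ell_{\alpha(X)}$. Substituting this into Lemma \ref{Lem 2.7} and writing $x := m\ell_{\alpha(X)}/2$, $y := \ell_{\beta(X)}/2$, and dropping the dependence on $X$ from the notation, the identity becomes
$$\cosh(x) = \cosh(x)\cosh(y) - \sinh(x)\sinh(y)\cos\theta_P,$$
which rearranges to
$$\cosh(x)\bigl(\cosh(y) - 1\bigr) = \sinh(x)\sinh(y)\cos\theta_P.$$

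Next I would use that $\beta(X)$ meets $\alpha(X)$ transversally, so $\beta$ is not null-homotopic and $y > 0$, giving $\cosh(y)-1 > 0$ and $\sinh(y) > 0$. If $\cos\theta_P \le 0$, the left-hand side is strictly positive while the right-hand side is non-positive, so no $m$ can satisfy the equality. If $\cos\theta_P > 0$, I divide through to obtain
$$\tanh(x) = \frac{\cosh(y) - 1}{\sinh(y)\cos\theta_P},$$
whose right-hand side is constant in $m$. Since $\tanh$ is strictly increasing on $(0,\infty)$ and $\ell_{\alpha(X)} > 0$ makes $m \mapsto m\ell_{\alpha(X)}/2$ injective on the positive integers, at most one positive integer $m$ can satisfy the equation.

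The case $\wt{(\alpha^m*_P\beta)}_\infty = \wt{\alpha^m}$ is handled by the same argument using the second formula of Lemma \ref{Lem 2.7}; the only change is a sign, leading to
$$\tanh(x) = -\frac{\cosh(y) - 1}{\sinh(y)\cos\theta_P}$$
when $\cos\theta_P < 0$, and an immediate contradiction when $\cos\theta_P \ge 0$; either way at most one positive $m$ works. I do not anticipate a genuine obstacle here: once one spots that $\cosh(m\ell_{\alpha(X)}/2)$ appears on both sides and can be divided out, the problem collapses to the strict monotonicity of $\tanh$.
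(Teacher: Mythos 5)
Your proposal is correct and follows essentially the same route as the paper: both substitute the equality of geodesic lengths into Lemma \ref{Lem 2.7}, cancel the $\cosh(m\ell_{\alpha(X)}/2)$ factor to arrive at $\tanh(m\ell_{\alpha(X)}/2)$ equal to an expression independent of $m$, and conclude from the strict monotonicity of $\tanh$. Your case analysis on the sign of $\cos\theta_P$ is in fact slightly more complete than the paper's, which only singles out $\theta_P=\pi/2$ before dividing, but the argument is the same.
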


\begin{proof}
Suppose $\wt{(\alpha^m*_{P}\beta)}_{0}=\wt{\alpha^m}$ for two distinct positive values of $m$. By Lemma \ref{Lem 2.7}, we have
\begin{align*}
\cosh\left(\frac{\ell_{\Al^{m}}}{2}\right)\cosh\left(\frac{\ell_{\Be}}{2}\right) - \sinh\left(\frac{\ell_{\Al^{m}}}{2}\right)\sinh\left(\frac{\ell_{\Be}}{2}\right)\cos\theta_{P}=\cosh\left(\frac{\ell_{\Al^{m}}}{2}\right)
\end{align*}
\noindent
This implies
\begin{align*}
\cosh\left(\frac{\ell_{\Be}}{2}\right) - \tanh\left(\frac{\ell_{\Al^{m}}}{2}\right)\sinh\left(\frac{\ell_{\Be}}{2}\right)\cos\theta_{P}=1.
\end{align*}
\noindent
If $\theta_{P}=\frac{\pi}{2}$, then $\cosh\left(\frac{\ell_{\Be}}{2}\right)=1$. Hence, $\ell_{\Be}=0$, which contradicts $\ell_{\Be}>0$. 

Therefore, we get
\begin{align} \label{tanh and fraction}
\tanh\left(\frac{\ell_{\Al^{m}}}{2}\right)=\frac{\cosh\left(\frac{\ell_{\Be}}{2}\right)-1}{\sinh\left(\frac{\ell_{\Be}}{2}\right)\cos\theta_{P}}
\end{align}
Note that the right-hand side of (\ref{tanh and fraction}) does not depend on $m$. Since the function $\tanh(x)$ is a strictly increasing function of $x$, the equation (\ref{tanh and fraction}) contradicts the hypothesis.
\end{proof}

For $\Al, \Be \in \hat{\pi}$, let $i(\Al, \Be)$ be the geometric intersection number of $\Al$ and $\Be$.

\begin{lemma} \label{Lem 3.5}
\cite[Lemma 3.5]{Chas-Kabiraj}
Let $\Al, \Be_1, \ldots, \Be_k, \iota\Al, \iota\Be_1, \ldots, \iota\Be_k$ be pairwise distinct free homotopy classes of closed curves such that $\Al$ is simple. Let $\Be = \sum_{j=1}^{k} c_j \Be_j$ where the coefficients $c_1, c_2, \ldots, c_k$ are nonzero elements of $K$. Then either $i(\Al, \Be_j) = 0$ for all $j \in \{1, \ldots, k\}$ or there exists a positive integer $m_0$ such that $[\wt{\Al^m}, \wt{\Be}]_{TWG} \neq 0$ for all $m \geq m_0$.
\end{lemma}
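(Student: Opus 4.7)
The plan is to expand $[\wt{\Al^m}, \wt{\Be}]_{TWG}$ in the natural basis of $K\wt{\pi}$ and exhibit a single basis element whose coefficient is nonzero for all large $m$. First I would fix a hyperbolic metric $X$ on $S$ and set $J = \{j : i(\Al, \Be_j) > 0\}$. If $J = \emptyset$ we are in the first alternative of the lemma. Otherwise, the classes $\Be_j$ with $j \notin J$ can be homotoped disjoint from $\Al^m$ and contribute nothing, and since $\Al$ is simple the geodesic $\Al^m(X)$ retraces $\Al(X)$ exactly $m$ times, so each point $P \in \Al(X) \cap \Be_j(X)$ yields $m$ transverse $(\Al^m(X), \Be_j(X))$-intersection points, all of the same sign and giving the same free homotopy class (the conjugacy class of $\Al^m\Be_j$). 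Therefore
\begin{align*}
[\wt{\Al^m}, \wt{\Be}]_{TWG} = m \sum_{j \in J} c_j \sum_{P \in \Al(X) \cap \Be_j(X)} \Bigl( \wt{(\Al^m *_P \Be_j)}_{0} - \wt{(\Al^m *_P \Be_j)}_{\infty} \Bigr).
\end{align*}

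Next I would choose, among the finite set of pairs $(P, j)$ with $j \in J$ and $P \in \Al(X) \cap \Be_j(X)$, a pair $(P^*, j^*)$ maximizing the angle $\theta_P(X)$. The claim is that the term $\wt{(\Al^m *_{P^*} \Be_{j^*})}_{\infty}$, contributed with coefficient $-m c_{j^*}$, survives every possible cancellation once $m$ is large enough. For each of the finitely many other pairs $(Q, j)$ appearing in the double sum, I would verify, using Proposition \ref{Prop 3.4}(1), that the equality $\wt{(\Al^m *_{P^*} \Be_{j^*})}_{\infty} = \wt{(\Al^m *_Q \Be_j)}_{0}$ holds for at most one value of $m$, and, using Proposition \ref{Prop 3.4}(2) with $\Be_1 = \Be_{j^*}$ and $P = P^*$, that $\wt{(\Al^m *_{P^*} \Be_{j^*})}_{\infty} = \wt{(\Al^m *_Q \Be_j)}_{\infty}$ likewise holds for at most one $m$; here the ``unless'' alternative of Proposition \ref{Prop 3.4}(2) is vacuous because the maximality of $\theta_{P^*}(X)$ rules out any $(\Al(X), \Be_{j^*}(X))$-intersection point $R$ with $\theta_{P^*}(X) < \theta_R(X)$. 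Choosing $m_0$ strictly larger than the finitely many exceptional values of $m$ produced this way, for every $m \geq m_0$ the coefficient of $\wt{(\Al^m *_{P^*} \Be_{j^*})}_{\infty}$ equals $-m c_{j^*} \neq 0$, so the bracket is nonzero.

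The hard part is expected to be the bookkeeping of coincidences rather than any new geometric input: one must remember to include the diagonal case $(Q, j) = (P^*, j^*)$ in the $_0$-versus-$_\infty$ comparison, which Proposition \ref{Prop 3.4}(1) absorbs by allowing the two inputs to be equal, and one must justify carefully the multiplicity-by-$m$ step of the initial reduction, where the simplicity of $\Al$ enters essentially—without simplicity, $\Al^m$ would acquire self-intersections and a host of additional homotopy classes would appear in the sum. The hypothesis that $\Al, \Be_1, \ldots, \Be_k$ and their $\iota$-images be pairwise distinct is used only at the outset, to guarantee that the $\wt{\Be_j}$ are genuinely distinct basis elements of $K\wt{\pi}$ so that the decomposition $\wt{\Be} = \sum c_j \wt{\Be_j}$ is unambiguous.
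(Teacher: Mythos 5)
Your proposal is correct and follows essentially the same route as the paper's proof of Lemma \ref{Lem 3.5}: expand the bracket with geodesic representatives, isolate a term attached to an intersection point of maximal angle, and invoke Lemma \ref{Prop 3.4} (with the maximality killing its second alternative) to bound the coincidences. If anything, your bookkeeping --- at most one exceptional $m$ per competing term, hence finitely many exceptional $m$ in total --- delivers the conclusion ``$[\wt{\Al^m},\wt{\Be}]_{TWG}\neq 0$ for \emph{all} $m\geq m_0$'' a little more directly than the paper's pigeonhole argument on the window $1\leq m\leq I+1$.
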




\begin{lemma} \label{Lem 3.6}
\cite[Lemma 3.6]{Chas-Kabiraj}
If $S$ is an orientable surface and $\Be$ is a closed curve on $S$ such that $i(\Al, \Be) = 0$ for every simple closed curve $\Al$. Then $\Be$ is non-essential.
\end{lemma}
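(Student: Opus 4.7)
The plan is to prove the contrapositive: assuming $\Be$ is essential in $S$ (not freely homotopic to a point, a boundary component, or a puncture), I will exhibit a simple closed curve $\Al$ with $i(\Al, \Be) > 0$. I fix a pants decomposition $\{P_1, \ldots, P_k\}$ of $S$, which exists with $k\geq 1$ because $\chi(S) < 0$ and $S$ is not of the type of a pair of pants; each $P_j$ is then an essential non-peripheral simple closed curve of $S$.

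If $i(P_j, \Be) > 0$ for some $j$, take $\Al = P_j$ and we are done. Otherwise $\Be$ admits a representative contained in a single pair of pants $\Pi$ of the decomposition. Since $S$ is not a pair of pants, $\Pi$ has at least one boundary component $P_j$ that is a pants curve of the decomposition (otherwise $\Pi = S$ would be a pair of pants). If the free homotopy class of $\Be$ in $\pi_1(\Pi)$ is a power $b^n$ of a boundary $b$ of $\Pi$, then either $b$ is an $S$-boundary or puncture, making $\Be$ non-essential and contradicting our standing hypothesis, or $b$ is a pants curve of the decomposition; in the latter case I construct a simple closed curve $\Al$ on $S$ with $i(\Al, b) > 0$ (a dual curve if $b$ is non-separating, a concatenation of essential arcs on each side if $b$ is separating, using here that $S$ is not a pair of pants), giving $i(\Al, \Be) = |n|\cdot i(\Al, b) > 0$.

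If instead the class of $\Be$ in $\pi_1(\Pi)$ is non-peripheral, I build $\Al$ by concatenating an essential arc $a^* \subset \Pi$ from $P_j$ back to $P_j$ with an arc on the other side of $P_j$. The arc $a^*$ can be chosen so that it cuts $\Pi$ into two annuli whose cores are the other two boundary components of $\Pi$; any $\Be$ non-peripheral in $\Pi$ therefore cannot lie in either annular component, forcing $i(a^*, \Be) > 0$ in $\Pi$. I then extend $a^*$ to a simple closed curve $\Al \subset S$ by adjoining a simple arc on the other side of $P_j$, chosen so that $\Al$ is non-peripheral in $S$ (for instance, by making this outside arc loop around a boundary of the adjacent pair of pants distinct from $P_j$). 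The non-peripherality of $\Al$ is what ensures the essential intersections of $a^*$ with $\Be$ inside $\Pi$ cannot be cancelled by free homotopy of $\Al$ in $S$, so that $i(\Al, \Be) > 0$.

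The main obstacle is this last case: one must construct $\Al$ so that it is simple, non-peripheral in $S$, and retains essential intersection with $\Be$ inside $\Pi$ after isotopy in $S$. The hypothesis that $S$ is not of the type of a pair of pants is used in multiple places — to guarantee that $\Pi$ has a pants-curve boundary $P_j$ and that there is enough room on the other side of $P_j$ to close $a^*$ into a non-peripheral simple loop — in accordance with Kento Sakai's observation acknowledged in the introduction.
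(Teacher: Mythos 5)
The paper itself gives no proof of this lemma; it is imported verbatim from Chas--Kabiraj, so your argument has to stand on its own. The overall strategy (contrapositive via a pants decomposition, then a case analysis inside a single pair of pants $\Pi$) is viable, and the peripheral cases are handled correctly, including $i(\alpha,b^{n})=|n|\,i(\alpha,b)$ and the dual-curve construction when $b$ is a pants curve. The problem is the last case, which you yourself identify as the main obstacle and then do not actually resolve: you assert that ``the non-peripherality of $\alpha$ is what ensures the essential intersections of $a^{*}$ with $\beta$ inside $\Pi$ cannot be cancelled by free homotopy of $\alpha$ in $S$.'' That is an assertion, not an argument, and non-peripherality of $\alpha$ is not the operative mechanism: a non-peripheral simple closed curve can perfectly well have zero geometric intersection with a given class, and nothing you have written rules out a homotopy of $\beta$ or of $\alpha$ in the large surface $S$ that removes all intersections. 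What you need is a genuine minimal-position statement, and it must be proved. (The earlier step ``otherwise $\beta$ admits a representative contained in a single $\Pi$'' is also left unjustified, though it is standard.)

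The clean way to close the gap within your setup is to make everything geodesic. Choose a hyperbolic metric in which the pants curves are geodesics; each $\Pi$ is then a $\pi_{1}$-injective subsurface with geodesic boundary, hence convex, so the geodesic representative $\beta_{0}$ of the essential class $\beta$ with $i(\beta,P_{j})=0$ for all $j$ lies inside some $\Pi$ (this also fixes the earlier step). Next show $i(\alpha,P_{j})=2$ for $\alpha=a^{*}\cup a'$: a bigon between $\alpha$ and $P_{j}$ would force $a^{*}$ or $a'$ to be inessential on its side, using $\pi_{1}$-injectivity of $\Pi$, contrary to the construction. Hence the geodesic representative $\alpha_{0}$ meets $\Pi$ in a single essential arc with both endpoints on $P_{j}$; since a pair of pants carries exactly one isotopy class of such arcs, that arc separates the other two boundary components of $\Pi$, so the non-peripheral geodesic $\beta_{0}\subset\Pi$ must cross it, and distinct closed geodesics realize their geometric intersection number, giving $i(\alpha,\beta)>0$. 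Note that once $i(\alpha,P_{j})=2$ is known, the non-peripherality of $\alpha$ in $S$ (which you spend effort arranging) is automatic and is not what does the work. Without an argument of this kind --- geodesics and convexity, or an explicit bigon or covering-space argument --- the proof is incomplete at its decisive step.
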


\begin{remark} 
Note that Lemma \ref{Lem 3.6} dose not hold in the case where $S$ is a surface of type of a pair of pants.

\end{remark}

\noindent
\textbf{Center Theorem.} \cite{Chas-Kabiraj}
{\it The center of the TWG Lie algebra is generated by the class of non-essential curves as a $K$-module.}

\section{proof of the annihilator theorem}
In \cite{Chas-Kabiraj}, Chas and Kabiraj determined that the center of the TWG Lie algebra is generated by the class of curves homotopic to a point, and the classes of curves winding multiple times around a single puncture or boundary component. This theorem can be rephrased as determing the generators of the annihilator of $A_0$ in $A_0$. In this section, we study the annihilator of $A_i$ in $A_j$ in the case where $(i,j)=(0,1),(1,0),(1,1)$.

\begin{lemma} \label{Geodesic and 1 Lem}
Let $\Al$ be an element of $\hat{\pi}$. Then the following conditions are equivalent.
 
\begin{enumerate}
  \item[(1)] $\Al=\iota{\Al}$
  \item[(2)] $\Al=\bf{1}$
\end{enumerate}
where $\bf{1}$ is the class of the trivial loop.

\end{lemma}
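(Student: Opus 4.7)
The direction $(2) \Rightarrow (1)$ is immediate, since $\iota\mathbf{1} = \mathbf{1}$. For the converse the plan is to argue by contradiction, using the hyperbolic structure on $S$ that the paper already exploits in Section~3. Fix a hyperbolic metric $X \in Teich(S)$; its holonomy realises $\pi_1(S)$ as a discrete, torsion-free subgroup of $PSL(2,\mathbb{R})$. Choose a representative $g \in \pi_1(S)$ of $\alpha$; then $\alpha = \iota\alpha$ translates into the existence of some $h \in \pi_1(S)$ with $hgh^{-1} = g^{-1}$. Since $\pi_1(S)$ is torsion-free, any $g \neq 1$ is either hyperbolic or parabolic, and I would split into these two cases.

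Hyperbolic case: $g$ has two distinct fixed points $p,q \in \partial\mathbb{H}^2$ (the endpoints of its axis), and the dynamics of conjugation force $h$ to swap $p$ and $q$. Then $h^2$ fixes both $p$ and $q$. Torsion-freeness rules out $h^2 = 1$ (that would force $g = g^{-1}$, hence $g^2 = 1$), so $h^2$ is hyperbolic with the same axis as $g$; since $h$ commutes with $h^2$, it preserves the fixed point set $\{p,q\}$ and therefore preserves the axis as a set while reversing its orientation. An orientation-preserving isometry of $\mathbb{H}^2$ with that property is a rotation by $\pi$ about a point of the axis, i.e.\ an elliptic element of order two, contradicting torsion-freeness.

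Parabolic case: $g$ fixes a unique $p \in \partial\mathbb{H}^2$, and $hgh^{-1} = g^{-1}$ forces $h(p) = p$. Normalising $p = \infty$ and $g = \bigl(\begin{smallmatrix}1 & 1 \\ 0 & 1\end{smallmatrix}\bigr)$, every such $h$ has the upper-triangular form $\bigl(\begin{smallmatrix}a & b \\ 0 & a^{-1}\end{smallmatrix}\bigr)$ with $a \in \mathbb{R}^{\times}$, and a routine $2\times 2$ multiplication gives $hgh^{-1} = \bigl(\begin{smallmatrix}1 & a^2 \\ 0 & 1\end{smallmatrix}\bigr)$. This can never equal $g^{-1} = \bigl(\begin{smallmatrix}1 & -1 \\ 0 & 1\end{smallmatrix}\bigr)$ over $\mathbb{R}$, giving the required contradiction. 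The only conceptual point is to be sure that the two cases above exhaust every nontrivial element of $\pi_1(S)$, which is exactly where torsion-freeness of a Fuchsian group is used; the remainder of the argument is standard hyperbolic geometry and a one-line matrix calculation.
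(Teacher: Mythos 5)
Your proof is correct and follows essentially the same route as the paper: translate $\alpha=\iota\alpha$ into a relation $hgh^{-1}=g^{-1}$ in $\pi_1(S)$, view $\pi_1(S)$ as a discrete torsion-free subgroup of $PSL(2,\mathbb{R})$ via the holonomy of a hyperbolic metric, and split into the parabolic and hyperbolic cases, deriving in each a contradiction with the absence of elliptic elements. The only (cosmetic) difference is that your hyperbolic case is argued geometrically via the fixed points on $\partial\mathbb{H}^2$ and a rotation by $\pi$, where the paper instead normalizes $\varphi(a)$ to a diagonal matrix and reads off $\operatorname{tr}\varphi(b)=0$; your parenthetical justification for excluding $h^2=1$ should really cite that $h\neq 1$ (it swaps two distinct boundary points) together with torsion-freeness, but this is a minor slip.
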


\begin{proof}
($(1)\Rightarrow (2)$) : Denote by $\pi_1(S)$ the fundamental group of $S$. There is an element $a \in \pi_1(S)$ such that a representative of $\Al$. Since $\Al=\iota{\Al}$, there exists an element $b \in \pi_1(S)$ such that $bab^{-1}=a^{-1}$. Denote by $\varphi : \pi_1(S) \to PSL(2, \mathbb{R})$ a natural injective group homomorphism induced by a hyperbolic metric $X\in Teich(S)$. There are some real numbers $p, q, r, s \in \mathbb{R}$ with $ps-qr=1$ such that $\varphi(b) = \pm\begin{pmatrix} p & q \\ r & s \end{pmatrix}$.

\underline{Case (A)} : The element $\varphi(a) \in PSL(2, \mathbb{R})$ is parabolic. 

\noindent
There exists a nonzero real number $t$ such that $\varphi(a) = \pm\begin{pmatrix} 1 & t \\ 0 & 1 \end{pmatrix}$. We have
\begin{align} \label{ba=a-1b}
\varphi(b)\varphi(a)=\varphi(a)^{-1}\varphi(b).
\end{align}
This implies 

$$
\pm\begin{pmatrix} p & pt+q \\ r & rt+s \end{pmatrix} = \pm\begin{pmatrix} p & -pt+q \\ r & -rt+s \end{pmatrix}.
$$
Hence we obtain $p=r=0$, however $ps-qr=1$. This is a contradiction.

\underline{Case (B)} : The element $\varphi(a) \in PSL(2, \mathbb{R})$ is hyperbolic. 

\noindent
There exists a real number $\La$ such that $\La>1$ and $\varphi(a) = \pm\begin{pmatrix} \La & 0 \\ 0 & \La^{-1} \end{pmatrix}$. The equation (\ref{ba=a-1b}) implies

$$
\pm\begin{pmatrix} p\La & q\La^{-1} \\ r\La & s\La^{-1} \end{pmatrix} = \pm\begin{pmatrix} p\La^{-1} & q\La^{-1} \\ r\La & s\La \end{pmatrix}
$$
Hence we obtain $p=s=0$. Since $\operatorname{tr}(\varphi(b))=0$, the element $\varphi(b)\in PSL(2, \mathbb{R})$ is an elliptic element. This is a contradiction.

Therefore, $\varphi(a) \in PSL(2, \mathbb{R})$ is an elliptic element. Then, $\varphi(a) = \pm\begin{pmatrix} 1 & 0 \\ 0 & 1 \end{pmatrix}$. By the injectivity of $\varphi$, $a$ is the identity element of $\pi_1(S)$. Hence we obtain $\Al=\bf{1}$.

\noindent
($(2)\Rightarrow (1)$) : Clearly, $\bf{1}=\iota{\bf{1}}$.
\end{proof}

\begin{lemma} \label{Key Lem}
Let $\Al$ and $\Be$ be elements of $\hat{\pi}$. Then the following conditions are equivalent.
 
\begin{enumerate}
  \item[(1)] $\ut{\Al}=\pm\ut{\Be}$,
  \item[(2)] $\wt{\Al}=\wt{\Be}$.
\end{enumerate}

\end{lemma}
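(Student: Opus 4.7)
The plan is to reduce the lemma to a simple linear algebra computation in $K\hat{\pi}$, using the previous Lemma \ref{Geodesic and 1 Lem} to pin down the fixed points of $\iota$ on $\hat{\pi}$.

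First I would dispose of the direction $(2) \Rightarrow (1)$, which is immediate from the definitions. If $\wt{\Al} = \wt{\Be}$, then either $\Al = \Be$ in $\hat{\pi}$, giving $\ut{\Al} = \ut{\Be}$, or $\Al = \iota\Be$, in which case $\ut{\Al} = \ut{\iota\Be} = -\ut{\Be}$; the last equality uses $\Be + \iota\Be \in A_{0} = \ker \ut{\cdot}$.

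For $(1) \Rightarrow (2)$ I would first make the structural observation that, since $1/2 \in K$, the submodule $A_{0}$ coincides with the $+1$-eigenspace of $\iota$ on $K\hat{\pi}$. An element $x = \sum_{\Ga} c_{\Ga}\Ga$ is $\iota$-fixed iff $c_{\Ga} = c_{\iota\Ga}$ for every $\Ga$, and such an $x$ can be written as a $K$-linear combination of the generators $\Ga + \iota\Ga$ (the factor $\tfrac{1}{2}$ is needed only to handle the class $\bf{1}$, which by Lemma \ref{Geodesic and 1 Lem} is the unique fixed point of $\iota$ on $\hat{\pi}$). Consequently, the hypothesis $\ut{\Al} = \pm \ut{\Be}$ is equivalent to $\Al \mp \Be \in A_{0}$, i.e.\ to the identity
\begin{equation*}
\iota\Al - \Al \; = \; \pm(\iota\Be - \Be)
\end{equation*}
in the free $K$-module $K\hat{\pi}$.

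It then remains to perform a short case analysis according to whether $\Al$ or $\Be$ equals $\bf{1}$. If $\Al = \bf{1}$, the left-hand side vanishes, forcing $\iota\Be = \Be$, so $\Be = \bf{1} = \Al$ by Lemma \ref{Geodesic and 1 Lem}, and hence $\wt{\Al} = \wt{\Be}$; the case $\Be = \bf{1}$ is symmetric. Otherwise $\Al \neq \iota\Al$ and $\Be \neq \iota\Be$, so $\{\Al, \iota\Al\}$ and $\{\Be, \iota\Be\}$ each consist of two distinct elements of the basis $\hat{\pi}$, and comparing coefficients on both sides of the displayed identity forces $\{\Al, \iota\Al\} = \{\Be, \iota\Be\}$, which is exactly $\wt{\Al} = \wt{\Be}$. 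There is no real obstacle: the main point is the identification of $A_{0}$ with the fixed points of $\iota$ together with the fact that $\bf{1}$ is the only $\iota$-fixed class, which has already been established.
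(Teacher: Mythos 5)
Your proof is correct and follows essentially the same route as the paper: both reduce condition (1) to the identity $\Al-\iota\Al=\pm(\Be-\iota\Be)$ in the free module $K\hat{\pi}$ and then apply Lemma \ref{Geodesic and 1 Lem} to split into the cases $\Al=\mathbf{1}$ and $\Al\neq\mathbf{1}$, comparing coefficients in the latter. Your explicit identification of $A_{0}$ with the $+1$-eigenspace of $\iota$ just makes precise a step the paper leaves implicit via the isomorphism $A_{1}\cong K\hat{\pi}/A_{0}$.
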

 
\begin{proof}
($(1)\Rightarrow (2)$) : (1) is equivalent to the condition $\Al-\iota\Al=\pm(\Be-\iota\Be)$. In the case where $\Al=\bf{1}$, $\Be=\iota\Be$ since the left hand side is 0. By Lemma \ref{Geodesic and 1 Lem}, $\Be=\bf{1}=\Al$. In the case where $\Al\neq\bf{1}$,  By Lemma \ref{Geodesic and 1 Lem}, $\Al\neq\iota\Al$. Therefore, $\Al=\Be,\iota\Be$. Thus $\wt{\Al}=\wt{\Be}$

\noindent
($(2)\Rightarrow (1)$) : By the definition of $\sim$, $\Al=\Be, \iota\Be$. Therefore, $\ut{\Al}=\pm\ut{\Be}$.
\end{proof}

\begin{thm} [Annihilator Theorem]
 The annihilator $\mathrm{Ann}_{A_0}{A_1}$ of $A_1$ in $A_0$ is generated by the element of the form $\Al+\iota\Al$ such that $\Al$ is non-essential.
 The annihilator $\mathrm{Ann}_{A_1}{A_i}$ of $A_i$ in $A_1$  $(i=0,1)$ is generated by the element of the form $\Al-\iota\Al$ such that $\Al$ is non-essential. In other words, the annihilator $\mathrm{Ann}_{K\hat{\pi}}{A_i}$ of $A_i$ in $K\hat{\pi}$ $(i=0,1)$ is generated by the classes of non-essential curves as a $K$-module.
\end{thm}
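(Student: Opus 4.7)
My approach is to follow the template of the Center Theorem proof of Chas and Kabiraj, carried out in parallel for the three remaining bracket types induced by the decomposition $K\hat{\pi} = A_0 \oplus A_1$. The easy inclusion is that non-essential classes always lie in each annihilator: if $\Al$ is non-essential, then its geodesic representative is a point, a boundary-parallel loop, or shrinks arbitrarily close to a puncture, and therefore meets no other geodesic transversally. Consequently every sum over $\Al \cap \Be$ in the four bracket formulas of Section \ref{The Thurston-Wolpert-Goldman Lie algebra} is empty, so $\Al \pm \iota\Al$ brackets trivially with everything in $K\hat{\pi}$.

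For the harder inclusion, I take $\sigma$ in the candidate annihilator and write
\begin{align*}
\sigma = \sum_{j=1}^{k} c_j(\Be_j \pm \iota\Be_j), \qquad c_j \in K \setminus \{0\},
\end{align*}
where the sign is $+$ if $\sigma \in A_0$ and $-$ if $\sigma \in A_1$, and where I use Lemma \ref{Geodesic and 1 Lem} and Lemma \ref{Key Lem} to arrange that the classes $\Be_1,\ldots,\Be_k,\iota\Be_1,\ldots,\iota\Be_k$ are pairwise distinct and nontrivial. Assuming for contradiction that some $\Be_{j_0}$ is essential, Lemma \ref{Lem 3.6} produces a simple closed curve $\Al$ with $i(\Al,\Be_{j_0}) > 0$. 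I then pair $\sigma$ with $\wt{\Al^m}$ or $\ut{\Al^m}$ (choosing whichever matches the case $(i,j)$ at hand via the relevant formula from Section \ref{The Thurston-Wolpert-Goldman Lie algebra}) and aim to show the resulting bracket is nonzero for all sufficiently large $m$.

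The mechanism for non-vanishing is a length-dominance argument driven by Lemma \ref{Lem 2.7}: the lengths $\ell_{(\Al^m*_P\Be_j)_0(Y)}$ and $\ell_{(\Al^m*_P\Be_j)_\infty(Y)}$ are explicit functions of $m\ell_{\Al(Y)}$, $\ell_{\Be_j(Y)}$, and $\cos\theta_P(Y)$. After a preliminary twist deformation along $\Al$ (Lemma \ref{lem twist}) to put the angles $\theta_P$ in general position, one separates the contributing terms by length, and for large $m$ a unique pair $(\Be_{j_0},P_0)$ achieves the maximum length. Coincidences among different $K\wt{\pi}$-terms are controlled by Lemma \ref{Prop 3.4} exactly as in Lemma \ref{Lem 3.5}, while in the $K\ut{\pi}$-part Lemma \ref{Key Lem} reduces equalities of the form $\ut{X} = \pm \ut{Y}$ to equalities $\wt{X} = \wt{Y}$, so the same control applies modulo a sign that I track explicitly. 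Hence the dominant term cannot be cancelled by the other summands.

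The main obstacle, and the reason for the new preparatory results, is to exclude cancellation between bracket terms and the "trivial" classes $\wt{\Be_j}$ or $\wt{\Al^m}$ themselves. Under the $\ut{}$-identification such collisions could in principle absorb the dominant term without being governed by Lemma \ref{Prop 3.4}; this is precisely the role played by the new Lemmas \ref{Lem for type3} and \ref{Lem for type4}, which bound the number of exponents $m$ at which $\wt{(\Al^m*_P\Be_j)}_0$ (or its $\infty$-counterpart) can equal a fixed $\wt{\Ga}$ or $\wt{\Al^m}$. Combining these finite exceptional sets of $m$ with the $\pm$-ambiguity of $K\ut{\pi}$, I can pick $m$ outside all of them, obtain a surviving maximum-length term in the bracket, and contradict $\sigma \in \mathrm{Ann}$. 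This forces every $\Be_j$ to be non-essential, completing all three cases simultaneously.
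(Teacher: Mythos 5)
Your plan is, in substance, the paper's own proof: reduce the hard inclusion to showing that each of the four brackets $[\wt{\Al^m},\wt{\Be}]_{TWG}$, $[\wt{\Al^m},\ut{\Be}]_{TWG}$, $[\ut{\Al^m},\wt{\Be}]_{TWG}$, $[\ut{\Al^m},\ut{\Be}]_{TWG}$ is nonzero for all large $m$ once some $\Be_j$ meets the simple curve $\Al$ essentially (this is exactly Lemma \ref{modified version of Lemma 3.9}), and then conclude with Lemma \ref{Lem 3.6}; you also correctly isolate the one genuinely new ingredient, Lemma \ref{Key Lem}, which turns the $\pm$-ambiguous coincidences in $K\ut{\pi}$ back into the unsigned coincidences of $K\wt{\pi}$ so that Lemma \ref{Prop 3.4} applies, and you note the $\varepsilon_P$ bookkeeping for the odd-type brackets. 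The one real divergence is your reliance on Lemmas \ref{Lem for type3} and \ref{Lem for type4} to exclude cancellation of bracket terms against standalone classes $\wt{\Be_j}$ or $\wt{\Al^m}$: in the Lie-algebra setting no such standalone classes occur, since every term of each of the four brackets is of the form $(\Al^m*_{P}\Be_i)_0$ or $(\Al^m*_{P}\Be_i)_\infty$, so these lemmas are superfluous here (the paper needs them only in Section 5, where PBW monomials and the deformation term $k(\Al,\Be)\ut{\Al}\ut{\Be}$ do create such collisions). This is a misdiagnosis of where the difficulty lies rather than a gap. Finally, your ``length-dominance'' framing via Lemma \ref{Lem 2.7} and a twist deformation would need extra work to justify a unique maximal-length term; since you in any case defer coincidence control to Lemma \ref{Prop 3.4} ``exactly as in Lemma \ref{Lem 3.5}'', it is cleaner to run the paper's pigeonhole on the exponent $m$ (vanishing for $1\le m\le I+1$ forces the same cancellation partner for two values of $m$, contradicting Lemma \ref{Prop 3.4}).
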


The proof of the Annihilator Theorem is based on some modification of Lemma \ref{Lem 3.5}. So, we will recall the proof of Lemma \ref{Lem 3.5} before proving the Annihilator Theorem.\\

\noindent
\textbf{Proof of the Lemma 3.9.} \cite[Lemma 3.5]{Chas-Kabiraj}

\begin{proof}
For any $m \in \mathbb{N}$,
\begin{align*}
[\wt{\Al^m}, \wt{\Be}]_{TWG} &= \sum_{i=1}^k c_i[\wt{\Al^m}, \wt{\Be}_i]_{TWG} \\
&= m \sum_{i=1}^k c_i\sum_{P \in \Al\cap \Be_i}\wampbiz - \wampbii.
\end{align*}

Fix a metric $X$. We take $\Al(X)$, $\Be_1(X), \ldots, \Be_k(X)$ as representatives of $\Al$, $\Be_1, \ldots, \Be_k$. For each $m$, the sum
\begin{align*}
\sum_{i=1}^k c_i\sum_{P \in \Al\cap \Be_i}\wampbiz - \wampbii
\end{align*}
has $I = 2(i(\Al,\Be_1) + \ldots + i(\Al,\Be_k))$ terms before performing possible cancellations. Consider $P$, one of the intersection points of $\Al(X)$ and $\bigcup_{i=1}^k \Be_i(X)$, and choose one of the terms of the bracket associated with $P$, $0$ or $\infty$. For simplicity, assume that the chosen term is $\wt{(\alpha^m*_{P}\beta_1)}_{0}$.

If for all $1 \leq m \leq I + 1$, we have that $[\wt{\Al^m}, \wt{\Be}]_{TWG} = 0$, then there exist $m_1, m_2 \in \{1, 2, \ldots, I + 1\}$, and an $X$-geodesic $\Be_i$, such that one of the following holds:
\begin{enumerate}
  \item[(a)] $\wt{(\alpha^m*_{P}\beta_1)}_{0} = \wt{(\alpha^m*_{Q}\beta_i)}_{0}$ for $m = m_1$ and $m = m_2$.
  \item[(b)] $\wt{(\alpha^m*_{P}\beta_1)}_{0} = \wt{(\alpha^m*_{Q}\beta_i)}_\infty$ and $m = m_1$ and $m = m_2$.
\end{enumerate}

\noindent
By Lemma \ref{Prop 3.4} (1), (a) is not possible. Hence (b) holds, and by Lemma \ref{Prop 3.4} (2), there exists a point $R$ in $\Al(X)\cap \Be_1(X)$ such that the angle $\theta_{P}(X)$ is strictly smaller than the angle $\theta_{R}(X)$. This is not possible because $P$ was chosen arbitrarily, and so the proof is complete.
\end{proof}

\begin{lemma} [modified version of Lemma 3.9] \label{modified version of Lemma 3.9}
Let $\Al, \Be_1, \ldots, \Be_k, \iota\Al, \iota\Be_1, \ldots, \iota\Be_k$ be pairwise distinct free homotopy classes such that $\Al$ is simple. Let $\Be = \sum_{i=1}^{k} c_i \Be_i$ where the coefficients $c_1, c_2, \ldots, c_k$ are nonzero elements of $K$. If there exists $j \in \{1, \ldots, k\}$ such that $i(\Al, \Be_j) > 0$ , then the following holds:

\begin{enumerate}
 \item[(1)]there exists a positive integer $m_0$ such that $[\wt{\Al^m}, \wt{\Be}]_{TWG} \neq 0$ for all $m \geq m_0$.
 \item[(2)]there exists a positive integer $m_0$ such that $[\wt{\Al^m}, \ut{\Be}]_{TWG} \neq 0$ for all $m \geq m_0$.
 \item[(3)]there exists a positive integer $m_0$ such that $[\ut{\Al^m}, \wt{\Be}]_{TWG} \neq 0$ for all $m \geq m_0$.
 \item[(4)]there exists a positive integer $m_0$ such that $[\ut{\Al^m}, \ut{\Be}]_{TWG} \neq 0$ for all $m \geq m_0$.
\end{enumerate}

\end{lemma}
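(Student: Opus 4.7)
The plan is to adapt the pigeonhole argument of Chas and Kabiraj used for Lemma \ref{Lem 3.5} (which is case (1)) to the three new brackets (2), (3), (4). Using the formulas of Proposition 2.4, each of
\begin{align*}
[\wt{\Al^m}, \ut{\Be}]_{TWG},\quad [\ut{\Al^m}, \wt{\Be}]_{TWG},\quad [\ut{\Al^m}, \ut{\Be}]_{TWG}
\end{align*}
equals $m$ times a sum over the points $P \in \Al \cap \Be_i$, because the $m$ preimages of each such $P$ in $\Al^m \cap \Be_i$ produce the same loop product and the same intersection sign. The $P$-summand has the form $\varepsilon_P(\Al,\Be_i)(\ut{X_0}(P,m) + \ut{X_\infty}(P,m))$ in case (2), $\ut{X_0}(P,m) - \ut{X_\infty}(P,m)$ in case (3), and $\varepsilon_P(\Al,\Be_i)(\wt{X_0}(P,m) + \wt{X_\infty}(P,m))$ in case (4), where $X_\bullet(P,m) = (\alpha^m *_P \beta_i)_\bullet$.

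Let $I = 2\sum_i i(\Al, \Be_i)$, fix a hyperbolic metric $X$, and choose an intersection point $P_0 \in \Al(X) \cap \Be_j(X)$ at which $\theta_{P_0}(X)$ is maximal. Assume for contradiction that the bracket in the case under analysis vanishes for $I + 1$ distinct positive values of $m$. In each case the term produced by $P_0$ with subscript $0$ must be cancelled against another term at every such $m$, so by pigeonhole there exist $m_1 \neq m_2$ and a fixed triple $(Q, i, t)$ with $t \in \{0, \infty\}$ giving the same cancellation at both values. For cases (2) and (3) the bracket takes values in $K\ut\pi$, and Lemma \ref{Key Lem} converts the forced equality $\ut{X_0}(P_0, m_s) = \pm \ut{X_t}(Q, m_s)$ ($s = 1, 2$) into the $\wt$-equality $\wt{X_0}(P_0, m_s) = \wt{X_t}(Q, m_s)$. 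For case (4) the bracket already lies in $K\wt\pi$ and the forced equality is of $\wt$-type directly. In all three cases, Proposition \ref{Prop 3.4} applies: it gives either that the equality holds for at most one $m$, contradicting $m_1 \neq m_2$, or the existence of a point $R$ with $\theta_R(X) > \theta_{P_0}(X)$, contradicting the maximality of $\theta_{P_0}$.

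New cancellation mechanisms absent in case (1) must also be dismissed. In cases (2) and (3) a single $P$-contribution could vanish on its own, requiring $\wt{X_0}(P,m) = \wt{X_\infty}(P,m)$; by Lemma \ref{Lem 2.7} this forces $\cos\theta_P(X) = 0$, so choosing $X$ generically rules this out at every $P$ simultaneously. Cancellations where the output happens to equal a fixed class independent of $m$, or equals $\wt{\Al^m}$, are controlled respectively by Lemma \ref{Lem for type3} (at most two values of $m$) and Lemma \ref{Lem for type4} (at most one value of $m$); enlarging the pigeonhole window from $I + 1$ to $I + C$ for a suitable constant $C$ absorbs them.

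The technical heart of the argument will be the sign bookkeeping in cases (2) and (4), where each $P$-summand carries the intersection sign $\varepsilon_P(\Al, \Be_i)$ and a $+$ between the $0$ and $\infty$ terms. Tracking how the coefficients $c_i$, the signs $\varepsilon_P$, and the sign from Lemma \ref{Key Lem} combine is needed to verify that the cancellation pattern produced by the vanishing bracket still forces an equality covered by Proposition \ref{Prop 3.4} (or by Lemmas \ref{Lem for type3}, \ref{Lem for type4}). Once this bookkeeping is in place, the contradiction with the maximality of $\theta_{P_0}(X)$ closes cases (2), (3), (4) in parallel with the proof of Lemma \ref{Lem 3.5}.
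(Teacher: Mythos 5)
Your proposal follows the paper's proof in all essentials: write each bracket as $m$ times a sum over the intersection points $P\in\alpha\cap\beta_i$, pigeonhole over $I+1$ values of $m$, use Lemma \ref{Key Lem} to upgrade the forced $\ut{\phantom{x}}$-equalities (with their $\pm$ ambiguity) to $\wt{\phantom{x}}$-equalities, and conclude with Proposition \ref{Prop 3.4} applied at a maximal-angle point. The ``sign bookkeeping'' you defer is a non-issue --- the argument only uses that the distinguished term carries a nonzero coefficient and must therefore coincide as a basis element with some other term, which is exactly how the paper dismisses cases (2) and (4) in one line --- and your appeals to Lemmas \ref{Lem for type3} and \ref{Lem for type4} and to a generic choice of metric are unnecessary here, since every term of the bracket has the form $(\alpha^m*_P\beta_i)_\bullet$ and self-cancellation at a single $P$ is already the $Q=P$ instance of Proposition \ref{Prop 3.4}(1).
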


Note that (1) is Lemma \ref{Lem 3.5} itself.

\begin{proof}
First, we will show (4). For any $m \in \mathbb{N}$,
\begin{align*}
[\ut{\Al^m}, \ut{\Be}]_{TWG} &= \sum_{i=1}^k c_i[\ut{\Al^m}, \ut{\Be}_i]_{TWG} \\
&= m \sum_{i=1}^k c_i\sum_{P \in \Al\cap \Be_i}\varepsilon_{P}(\Al^m,\Be_i)(\wampbiz + \wampbii).
\end{align*}
Since $\varepsilon_{P}(\Al^m,\Be_i)$ is at most $1$ or $-1$, the same argument as in the proof of Lemma \ref{Lem 3.5} works.

Next, we will show (3). For any $m \in \mathbb{N}$,
\begin{align*}
[\ut{\Al^m}, \wt{\Be}]_{TWG} &= \sum_{i=1}^k c_i[\ut{\Al^m}, \wt{\Be}_i]_{TWG} \\
&= m \sum_{i=1}^k c_i\sum_{P \in \Al\cap \Be_i}\uampbiz - \uampbii.
\end{align*}

Fix a metric $X$. We take $\Al(X)$, $\Be_1(X), \ldots, \Be_k(X)$ as representatives of $\Al$, $\Be_1, \ldots, \Be_k$. For each $m$, the sum
\begin{align*}
\sum_{i=1}^k c_i\sum_{P \in \Al\cap \Be_i}\uampbiz - \uampbii
\end{align*}
has $I = 2(i(\Al,\Be_1) + \ldots + i(\Al,\Be_k))$ terms before performing possible cancellations. Consider $P$, one of the intersection points of $\Al(X)$ and $\bigcup_{i=1}^k \Be_i(X)$, and choose one of the terms of the bracket associated with $P$, $0$ or $\infty$. For simplicity, assume that the chosen term is $\ut{(\alpha^m*_{P}\beta_1)}_{0}$.

If for all $1 \leq m \leq I + 1$, we have that $[\ut{\Al^m}, \wt{\Be}]_{TWG} = 0$, then there exist $m_1, m_2 \in \{1, 2, \ldots, I + 1\}$, and an $X$-geodesic $\Be_i$, such that one of the following holds:
\begin{enumerate}
  \item[(c)] $\ut{(\alpha^m*_{P}\beta_1)}_{0} = \pm\ut{(\alpha^m*_{Q}\beta_i)}_{0}$ for $m = m_1$ and $m = m_2$.
  \item[(d)] $\ut{(\alpha^m*_{P}\beta_1)}_{0} = \pm\ut{(\alpha^m*_{Q}\beta_i)}_\infty$ and $m = m_1$ and $m = m_2$.
\end{enumerate}
Here, we note that the term $\ut{\Al}$ may cancel the term $-\ut{\Be}$ in $K\ut{\pi}$. By Lemma \ref{Key Lem}, (c) is equivalent to (a) and (d) is equivalent to (b). Therefore, the same argument as in the proof of Lemma \ref{Lem 3.5} works.

For (2), the proof is exactly the same as (3). So the proof is complete.
\end{proof}

\noindent
\textbf{Proof of the Annihilator Theorem.}

\begin{proof}
 The Annihilator Theorem follows from Lemma \ref{modified version of Lemma 3.9} and Lemma \ref{Lem 3.6}.
\end{proof}

Theorems similar to the Annihilator Theorem hold for the universal enveloping algebra and the symmetric algebra of $K\hat{\pi}$. We will prove them in the next section.

\section{universal enveloping algebra and symmetric algebra of the goldman lie algebra}

Denote by $U(K\hat{\pi})$ the universal enveloping algebra, and by $S(K\hat{\pi})$ the symmetric algebra of $K\hat{\pi}$. The algebra $U(K\hat{\pi})$ has a natural Poisson algebra structure with the commutator being the Lie bracket. The algebra $S(K\hat{\pi})$ is also a Poisson algebra by extending the Lie bracket of $K\hat{\pi}$ using the Leibniz rule.

In \cite{Turaev1991}, Turaev introduced a canonical deformation of $S(K\hat{\pi})$  depending on one parameter $k \in K$ Namely, for $\Al, \Be \in \hat{\pi}$, set
\begin{align*}
[\Al, \Be]_{G,k} = [\Al, \Be]_{G} - k(\Al, \Be)\Al\Be
\end{align*}
where $(\Al, \Be)$ is the algebraic intersection number of $\Al$ and $\Be$, and $\Al\Be$ is the product of $\Al$ and $\Be$ in $S(K\hat{\pi})$. By the Leibniz rule,  the bracket $[-, -]_{G,k} $ is extended to a pairing $[-, -]_{G,k} : S(K\hat{\pi}) \times S(K\hat{\pi}) \to S(K\hat{\pi})$. The algebra $S(K\hat{\pi})$ equipped with the bracket $[-, -]_{G,k} $ is also a Poisson algebra. It is denoted by $S_{k}(K\hat{\pi})$. In particular, the algebra $S_{0}(K\hat{\pi})$ is the ordinary symmetric algebra $S(K\hat{\pi})$.

By the isomorphism from $K\hat{\pi}$ to $K\wt{\pi}\oplus K\ut{\pi}$, we can induce a Poisson algebra structure of $S_{k}(K\hat{\pi})$ on $S_{k}(K\wt{\pi}\oplus K\ut{\pi})$. We denote by $[-, -]_{TWG,k}$ the Poisson bracket on $S_{k}(K\wt{\pi}\oplus K\ut{\pi})$. For $\Al, \Be \in \hat{\pi}$, the Poisson bracket $[\wt{\alpha},\wt{\beta}]_{TWG,k}$, $[\wt{\alpha},\ut{\beta}]_{TWG,k}$, $[\ut{\alpha},\wt{\beta}]_{TWG,k}$ and $[\ut{\alpha},\ut{\beta}]_{TWG,k}$ can be written as:

\begin{align*}
&[\wt{\Al}, \wt{\Be}]_{TWG,k} = [\wt{\Al}, \wt{\Be}]_{TWG} - k(\Al, \Be)\ut{\Al}\ut{\Be},\\
&[\wt{\Al}, \ut{\Be}]_{TWG,k} = [\wt{\Al}, \ut{\Be}]_{TWG} - k(\Al, \Be)\ut{\Al}\wt{\Be},\\
&[\ut{\Al}, \wt{\Be}]_{TWG,k} = [\ut{\Al}, \wt{\Be}]_{TWG} - k(\Al, \Be)\wt{\Al}\ut{\Be},\\
&[\ut{\Al}, \ut{\Be}]_{TWG,k} = [\ut{\Al}, \ut{\Be}]_{TWG} - k(\Al, \Be)\wt{\Al}\wt{\Be}.
\end{align*}

There are canonical maps from $K\hat{\pi}$ to $U(K\hat{\pi})$ and $S(K\hat{\pi})$. To simplify notation, we denote an element and its image under these maps by the same notation. We also denote the product of two elements in $U(K\hat{\pi})$ and $S(K\hat{\pi})$ simply by juxtaposing the elements.
Let us recall the Poincaré-Birkhoff-Witt theorem for $K\hat{\pi}$. 

Let us consider the sets
\begin{align*}
&\wt{T} = \{\wt{x} \in K\wt{\pi}| x \in \hat{\pi} \},\\
&\ut{T} = \{\ut{x} \in K\ut{\pi}| x \in \hat{\pi} \setminus \{{\bf 1}\}\},\\
&T = \wt{T}\cup\ut{T} .
\end{align*}
\noindent
The set $\wt{T}$ (resp. $\ut{T}$) gives a basis of $K\wt{\pi}$ (resp. $K\ut{\pi}$). Therefore, the set $T$ gives a basis of $K\hat{\pi}$ as a $K$-module. Furthermore, we fix a total order $\leq$ on the set $\wt{T}$ and $\ut{T}$. By defining every element of $\wt{T}$ is less than every element of $\ut{T}$, we also define a total order $\leq$ on the set $T$. Consider the set
\begin{multline} \label{Basis wt ut by PBW}
\mathcal{B}=\lbrace \wt{\Be_{i_1}}\ldots\wt{\Be_{i_h}}\ut{\Be_{i_{h+1}}}\ldots\ut{\Be_{i_{h+l}}}| \wt{\Be_{i_1}},\ldots,\wt{\Be_{i_h}} \in \wt{T}, \ut{\Be_{i_{h+1}}},\ldots,\ut{\Be_{i_{h+l}}}\in \ut{T}, h,l\in \Z_{\geq 0}\\, 
\wt{\Be_{i_1}}\leq\ldots\leq \wt{\Be_{i_h}}, \ut{\Be_{i_{h+1}}}\leq\ldots\leq\ut{\Be_{i_{h+l}}}\rbrace.
\end{multline}

\noindent
Both $U(K\hat{\pi})$ and $S(K\hat{\pi})$ are freely generated by $\mathcal{B}$ as a $K$-module. Moreover, the natural maps from $K\hat{\pi}$ into $U(K\hat{\pi})$ and $S(K\hat{\pi})$ are injective Lie algebra homomorphisms.

On the other hand, if $k\neq0$, the natural map from $K\hat{\pi}$ into $S_{k}(K\hat{\pi})$ is not a Lie algebra homomorphism, but an injective linear map.

\begin{thm}
The annihilator $\mathrm{Ann}_{S_{k}(K\hat{\pi})}{A_i}$ of $A_i$ in $S_{k}(K\hat{\pi})$ $(i=0,1)$ is generated by scalars $K$ and the classes of non-essential curves as a $K$-algebra.
\end{thm}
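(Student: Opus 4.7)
The plan is to prove two inclusions, mirroring the Annihilator Theorem argument and lifting it to $S_k(K\hat{\pi})$ via the Leibniz rule. Let $R$ denote the $K$-subalgebra of $S_k(K\hat{\pi})$ generated by $K$ and the classes $\wt{\gamma}, \ut{\gamma}$ of non-essential curves $\gamma$. The easy inclusion $R \subset \mathrm{Ann}_{S_k(K\hat{\pi})}(A_i)$ reduces, by Leibniz, to checking $[\gamma^\ast, a]_{TWG,k} = 0$ for each non-essential class $\gamma^\ast \in \{\wt{\gamma}, \ut{\gamma}\}$ and each $a \in A_i$: the TWG part vanishes by the Annihilator Theorem~\ref{Annihilator Theorem}, and the $k$-correction vanishes because a non-essential curve admits representatives disjoint from any other closed curve, so its algebraic intersection number with any curve is zero.

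For the reverse inclusion, I will take $u \in \mathrm{Ann}_{S_k(K\hat{\pi})}(A_i)$ and expand $u = \sum_I c_I B_I$ in the PBW basis $\mathcal{B}$ from~\eqref{Basis wt ut by PBW}. I will write each $B_I = N_I E_I$ where $N_I$ collects its factors lying in $\{\wt{\gamma}, \ut{\gamma} : \gamma \text{ non-essential}\}$ and $E_I$ collects its remaining essential factors, and set $\deg_E(u) := \max\{\deg E_I : c_I \neq 0\}$. The induction is on $\deg_E(u)$: the base case $\deg_E(u) = 0$ gives $u \in R$ immediately. For the inductive step, assume $\deg_E(u) = d > 0$; the aim is to derive a contradiction by producing a simple closed curve $\Al$ and a positive integer $m$ such that $[\wt{\Al^m}, u]_{TWG,k} \neq 0$ (case $i = 0$; replace $\wt{\Al^m}$ by $\ut{\Al^m}$ for $i = 1$).

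To find such an $\Al$, I will pick an index $I_0$ with $\deg E_{I_0} = d$ and an essential factor $\Be$ of $E_{I_0}$, and invoke Lemma~\ref{Lem 3.6} to get a simple $\Al$ with $i(\Al, \Be) > 0$. Expanding $[\wt{\Al^m}, u]_{TWG,k}$ by Leibniz, each contribution $c_I B_I^{(j)} [\wt{\Al^m}, f_{I,j}]_{TWG,k}$ splits as a TWG part of polynomial degree $\deg B_I$ (a linear combination of terms of the form $B_I^{(j)} \cdot (\alpha^m *_P f_{I,j})_0^\pm$) and a $k$-correction part of polynomial degree $\deg B_I + 1$. Since these parts live in disjoint polynomial degrees of $\mathcal{B}$, the vanishing of $[\wt{\Al^m}, u]_{TWG,k}$ decouples: the top polynomial degree $\deg u + 1$ receives only $k$-correction contributions from the top polynomial-degree part $u_D$ of $u$, yielding algebraic-intersection constraints on $u_D$, while the TWG parts are constrained in each lower polynomial degree separately.

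The main obstacle will be controlling cancellations among the TWG contributions $c_I B_I^{(j)} \cdot (\alpha^m *_P f_{I,j})_0^\pm$ across the various choices of $(I, j, P)$. My plan is to fix a metric $X \in \mathrm{Teich}(S)$, order $\wt{T} \cup \ut{T}$ lexicographically by $X$-length with an arbitrary tie-breaker, and extend this to a monomial order on $\mathcal{B}$. For $m$ sufficiently large, the lex-leading monomial in the Leibniz expansion of $[\wt{\Al^m}, u]_{TWG,k}$ should arise from differentiating the essential factor $\Be$ in $B_{I_0}$ at the $(\Al(X), \Be(X))$-intersection point of maximal angle; Lemma~\ref{Prop 3.4}, together with Lemmas~\ref{Lem for type3} and~\ref{Lem for type4}, will ensure that once $m$ is large enough no other pair $(I, j, P)$ in the Leibniz expansion produces the same monomial, so its coefficient is non-zero. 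This contradicts $u \in \mathrm{Ann}$, forces $\deg_E(u) = 0$, and closes the induction.
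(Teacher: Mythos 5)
Your proposal follows essentially the same route as the paper's proof: expand in the PBW basis \eqref{Basis wt ut by PBW}, apply the Leibniz rule to $[\wt{\Al^m},u]_{TWG,k}$ for a simple closed curve $\Al$ meeting a putative essential factor (such an $\Al$ exists by Lemma \ref{Lem 3.6}), and rule out all possible coincidences of monomials for all but finitely many $m$ using Lemmas \ref{Prop 3.4}, \ref{Lem for type3} and \ref{Lem for type4}, the induction on the number of essential factors and the polynomial-degree remark being harmless extra scaffolding on top of this. The one point to repair is the lex-order framing: the term you must isolate is the monomial containing the factor $\wt{(\Al^m*_{P}\Be)}_{0}$ for $P$ of \emph{maximal} angle, and by Lemma \ref{Lem 2.7} this factor is in general not the longest new curve produced by the bracket (the $\infty$-terms at points of small angle can be longer, depending on the signs of the cosines), so it need not be leading for any length-based monomial order; the maximal-angle choice is needed only to exclude the second alternative of Lemma \ref{Prop 3.4}(2), and the survival of this term should be argued, as in the paper's proof of Lemma \ref{Lem 3.5}, by a pigeonhole over finitely many values of $m$ rather than by a leading-term argument.
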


\begin{proof}
For $i=0$, we compute the annihilator of $A_i$ in $S_{k}(K\hat{\pi})$. For $i=1$, the proof is exactly the same.
Let $\Be$ be an element of the annihilator of $A_0$ in $S_{k}(K\hat{\pi})$. Then by (\ref{Basis wt ut by PBW}),
\begin{align*}
\Be=\sum_{i=1}^n c_i\wt{\Be_{i_1}}\ldots\wt{\Be_{i_{h_i}}}\ut{\Be_{i_{{h_i}+1}}}\ldots\ut{\Be_{i_{{h_i}+{l_i}}}}
\end{align*}

\noindent
where $\wt{\Be_{i_1}},\ldots,\wt{\Be_{i_{h_i}}} \in \wt{T}, \ut{\Be_{i_{{h_i}+1}}},\ldots,\ut{\Be_{i_{{h_i}+{l_i}}}}\in \ut{T}, h,l\in \Z_{\geq 0}, \wt{\Be_{i_1}}\leq\ldots\leq \wt{\Be_{i_{h_i}}}, \ut{\Be_{i_{{h_i}+1}}}\leq\ldots\leq\ut{\Be_{i_{{h_i}+{l_i}}}}, c_i\neq0 $ for all $i \in \lbrace1, 2, . . . , m\rbrace$. We prove each free homotopy class $\Be_{i_{j}}$ is non-essential.  We have
\begin{align*}
0&=[\wt{\Al^m}, \Be]_{TWG,k}\\
&= \sum_{i=1}^n c_i[\wt{\Al^m}, \wt{\Be_{i_1}}\ldots\wt{\Be_{i_{h_i}}}\ut{\Be_{i_{{h_i}+1}}}\ldots\ut{\Be_{i_{{h_i}+{l_i}}}}]_{TWG,k}\\
\begin{split}
&= \sum_{i=1}^n c_i\biggl(\sum_{j=1}^{h_i} \wt{\Be_{i_1}}\ldots\wt{\Be_{i_{j-1}}}\bigl([\wt{\Al^m}, \wt{\Be_{i_j}}]_{TWG}-k(\Al^m, \Be_{i_j})\ut{\Al^m}\ut{\Be_{i_j}}\big)\wt{\Be_{i_{j+1}}}\ldots\wt{\Be_{i_{h_i}}}\ut{\Be_{i_{{h_i}+1}}}\ldots\ut{\Be_{i_{{h_i}+{l_i}}}}\\
& \quad\quad\quad\quad+ \sum_{j={h_i}+1}^{{h_i}+{l_i}} \wt{\Be_{i_1}}\ldots\wt{\Be_{i_{h_i}}}\ut{\Be_{i_{{h_i}+1}}}\ldots\ut{\Be_{i_{j-1}}}\bigl([\wt{\Al^m}, \ut{\Be_{i_j}}]_{TWG}-k(\Al^m, \Be_{i_j})\ut{\Al^m}\wt{\Be_{i_j}}\bigr)\ut{\Be_{i_{j+1}}}\ldots\ut{\Be_{i_{{h_i}+{l_i}}}}\biggr)\\
&= \sum_{i=1}^n c_i\biggl(\sum_{j=1}^{h_i} m\Bigl(\sum_{P_{i_j} \in \Al\cap \Be_{i_j}}\wt{\Be_{i_1}}\ldots\wt{\Be_{i_{j-1}}}\bigl(\wt{(\Al^m*_{P_{i_j}}\Be_{i_j})}_{0}-\wt{(\Al^m*_{P_{i_j}}\Be_{i_j})}_{\infty}-k(\Al, \Be_{i_j})\ut{\Al^m}\ut{\Be_{i_j}}\bigr)\\
& \quad\quad\quad\quad\quad\quad\quad\quad\quad\quad\quad\quad\quad\quad\quad\quad\quad\quad\quad\quad
\quad\quad\quad\quad\quad\quad\quad\quad\wt{\Be_{i_{j+1}}}\ldots\wt{\Be_{i_{h_i}}}\ut{\Be_{i_{{h_i}+1}}}\ldots\ut{\Be_{i_{{h_i}+{l_i}}}}\Bigr)\\
& \quad\quad\quad\quad+ \sum_{j={h_i}+1}^{{h_i}+{l_i}} m\Bigl(\sum_{P_{i_j} \in \Al\cap \Be_{i_j}}\wt{\Be_{i_1}}\ldots\wt{\Be_{i_{h_i}}}\ut{\Be_{i_{{h_i}+1}}}\ldots\ut{\Be_{i_{j-1}}}\Bigl(\varepsilon_{P_{i_j}}(\Al^m,\Be_{i_j})\bigl(\ut{(\Al^m*_{P_{i_j}}\Be_{i_j})}_{0}\\
&\quad\quad\quad\quad\quad\quad\quad\quad\quad\quad
\quad\quad\quad\quad\quad\quad\quad+\ut{(\Al^m*_{P_{i_j}}\Be_{i_j})}_{\infty}\bigr)-k(\Al^m, \Be_{i_j})\ut{\Al^m}\wt{\Be_{i_j}}\Bigr)\ut{\Be_{i_{j+1}}}\ldots\ut{\Be_{i_{{h_i}+{l_i}}}}\Bigr)\biggr)\\
\end{split}
\end{align*}

Fix a metric $X \in Teich(S)$ and take $\Al$ to be any simple $X$-geodesic. Fix any $\Be_{i_j}$ and consider $P \in \Al(X)\cap\Be_{i_j}(X)$ such that $\theta_P(X) \geq \theta_R(X)$ for all $R \in \Al(X)\cap\Be_{i_j}(X)$.


First, we will consider in the case where $1\le j \le {h_i}$. As $[\wt\Al^m, \Be]_{TWG,k}= 0$ for all positive integer $m$, from the above expression, there exists $\wt\Be_{i'_{j'}} \neq \wt\Be_{i_j}$ such that for infinitely many positive integer $m$, one of the following is true.\\
• $\wt{(\Al^m*_{P}\Be_{i_j})}_{0} = \wt{(\Al^m*_{Q}\Be_{i'_{j'}})}_{0}$ for some $Q \in \Al(X) \cap \Be_{i'_{j'}}(X)$. This is impossible by Lemma \ref{Prop 3.4}.\\
• $\wt{(\Al^m*_{P}\Be_{i_j})}_{0} = \wt{(\Al^m*_{Q}\Be_{i'_{j'}})}_{\infty}$ for some $Q \in \Al(X) \cap \Be_{i'_{j'}}(X)$. This is impossible by Lemma \ref{Prop 3.4}.\\
•  $\wt{(\Al^m*_{P}\Be_{i_j})}_{0} = \wt{\Be_{i'_{j'}}}$. This is impossible by Lemma \ref{Lem for type3}.\\
•  $\wt{(\Al^m*_{P}\Be_{i_j})}_{0} = \wt{\Al^m}$. This is impossible by Lemma \ref{Lem for type4}.

\noindent
Thus, each $\Be_{i_j}$ is disjoint from $\Al$. As $\Al$ is an arbitrary simple closed geodesic, each $\Be_{i_j}$ is disjoint from every simple closed geodesic on the surface. Hence by Lemma \ref{Lem 3.6}, each $\Be_{i_j}$ is non-essential in the case where $1\le j \le {h_i}$.

Next, we will consider in the case where ${h_i}+1\le j \le {h_i}+{l_i}$. As $[\wt\Al^m, \Be]_{TWG,k}= 0$ for all positive integer $m$, from the above expression and, there exists $\ut\Be_{i'_{j'}} \neq \ut\Be_{i_j}$ such that for infinitely many positive integer $m$, one of the following is true.\\
• $\ut{(\Al^m*_{P}\Be_{i_j})}_{0} = \pm\ut{(\Al^m*_{Q}\Be_{i'_{j'}})}_{0}$ for some $Q \in \Al(X) \cap \Be_{i'_{j'}}(X)$. This is impossible by Lemma \ref{Key Lem} and Lemma \ref{Prop 3.4}.\\
• $\ut{(\Al^m*_{P}\Be_{i_j})}_{0} = \pm\ut{(\Al^m*_{Q}\Be_{i'_{j'}})}_{\infty}$ for some $Q \in \Al(X) \cap \Be_{i'_{j'}}(X)$. This is impossible by Lemma \ref{Key Lem} and Lemma \ref{Prop 3.4}.\\
•  $\ut{(\Al^m*_{P}\Be_{i_j})}_{0} = \pm\ut{\Be_{i'_{j'}}}$. This is impossible by Lemma \ref{Key Lem} and Lemma \ref{Lem for type3}.\\
•  $\ut{(\Al^m*_{P}\Be_{i_j})}_{0} = \pm\ut{\Al^m}$. This is impossible by Lemma \ref{Key Lem} and Lemma \ref{Lem for type4}.

\noindent
For the same reason as before, each $\Be_{i_j}$ is non-essential in the case where ${h_i}+1\le j \le {h_i}+{l_i}$. 
Therefore, each free homotopy class $\Be_{i_{j}}$ is non-essential. 
\end{proof}
\begin{thm}
The annihilator $\mathrm{Ann}_{U(K\hat{\pi})}{A_i}$ of $A_i$ in $U(K\hat{\pi})$ $(i=0,1)$ is generated by scalars $K$ and the classes of non-essential curves as a $K$-algebra.
\end{thm}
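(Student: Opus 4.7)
The plan is to reduce the claim to the already established symmetric-algebra version (Theorem \ref{The Annihilator Theorem for the symmetric algebra}) by means of the Poincar\'e--Birkhoff--Witt filtration on $U(K\hat{\pi})$. The easy containment is immediate: by Theorem \ref{Annihilator Theorem} applied to both $i=0$ and $i=1$, every non-essential directed curve lies in $\mathrm{Ann}_{K\hat{\pi}} A_0 \cap \mathrm{Ann}_{K\hat{\pi}} A_1$ and hence in the center of the Goldman Lie algebra $K\hat{\pi}$. Consequently each such curve is central in the associative algebra $U(K\hat{\pi})$, so the $K$-subalgebra $B \subset U(K\hat{\pi})$ generated by $K$ and the non-essential curves lies in the center of $U(K\hat{\pi})$ and, in particular, in $\mathrm{Ann}_{U(K\hat{\pi})} A_i$ for $i=0,1$.

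For the reverse containment I would equip $U(K\hat{\pi})$ with its standard ascending PBW filtration $F_0 \subset F_1 \subset \cdots$, for which $\mathrm{gr}\, U(K\hat{\pi}) \cong S(K\hat{\pi})$, and invoke the classical fact that for $x \in K\hat{\pi}$ the adjoint action $[x,-]$ preserves the filtration and induces on the associated graded the Poisson bracket $\{x,-\}$ of $S(K\hat{\pi})$ coming from the Goldman bracket. The argument then proceeds by induction on the degree $d$ with $\Be \in F_d \setminus F_{d-1}$. Given $\Be \in \mathrm{Ann}_{U(K\hat{\pi})} A_i$, its symbol $\bar{\Be} \in S^d(K\hat{\pi})$ satisfies $\{a,\bar{\Be\,}\}=0$ for every $a\in A_i$, so $\bar{\Be}$ lies in $\mathrm{Ann}_{S(K\hat{\pi})} A_i$; by Theorem \ref{The Annihilator Theorem for the symmetric algebra}, $\bar{\Be}$ is a polynomial in $K$ and in non-essential curves. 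Because non-essential curves pairwise commute in $U(K\hat{\pi})$, this polynomial lifts unambiguously to an element $\Be' \in B$ with the same symbol; then $\Be - \Be' \in F_{d-1}$ and, as $\Be'$ is central, $\Be - \Be' \in \mathrm{Ann}_{U(K\hat{\pi})} A_i$. The inductive hypothesis yields $\Be - \Be' \in B$, whence $\Be \in B$. The base case $d=0$ is trivial since $F_0 = K \subset B$.

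The only non-routine ingredient is the compatibility of the PBW filtration on $U(K\hat{\pi})$ with the Poisson structure on its associated graded $S(K\hat{\pi})$, but this is classical and independent of our specific Lie algebra. Consequently no new geometric or hyperbolic input is needed beyond what already appears in the proof of Theorem \ref{The Annihilator Theorem for the symmetric algebra}: the substance of the present argument is the inductive bookkeeping that reduces the question from $U(K\hat{\pi})$ to $S(K\hat{\pi})$ one filtration degree at a time. I do not anticipate a serious obstacle here; the argument is essentially a formal consequence of the symmetric-algebra result together with the centrality of the non-essential curves.
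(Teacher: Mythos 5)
Your argument is correct, and it takes a genuinely different route from the paper's. The paper proves the $U(K\hat{\pi})$ statement by redoing the geometric analysis inside $U(K\hat{\pi})$: it expands $[\wt{\Al^m},\Be]_{TWG}$ over the PBW basis (\ref{Basis wt ut by PBW}), uses the relation $xy=yx-[x,y]$ to re-sort the resulting monomials, singles out an index $I$ with $h_I+l_I$ maximal so that the reordering corrections (which drop filtration degree) cannot interfere with the leading terms, reruns the cancellation analysis of Lemmas \ref{Prop 3.4}, \ref{Lem for type3} and \ref{Key Lem} to conclude that the constituents of the leading monomial are non-essential, and finally subtracts $c_I\wt{\Be_{I_1}}\cdots\ut{\Be_{I_{h_I+l_I}}}$ and iterates. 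Your proof observes that this choice of maximal $h_I+l_I$ is exactly the passage to the top graded piece $F_d/F_{d-1}\cong S^d(K\hat{\pi})$, on which the commutator with $a\in A_i$ induces the Poisson bracket; the symbol of an annihilating element is therefore Poisson-annihilated, Theorem \ref{The Annihilator Theorem for the symmetric algebra} (in its $k=0$ case) applies, and centrality of the non-essential classes lets you lift and induct downward through the filtration. This buys a clean separation of concerns: the hyperbolic geometry is quarantined in the symmetric-algebra theorem, and the enveloping-algebra theorem becomes a formal corollary, whereas the paper pays for its directness by repeating the geometric case analysis. The two small points your reduction relies on are both sound: the subalgebra of $S(K\hat{\pi})$ generated by $K$ and the non-essential classes is graded (being generated by homogeneous elements of degrees $0$ and $1$), so a homogeneous element of the annihilator is a homogeneous polynomial in non-essential classes; and such a polynomial lifts unambiguously to a central element of $U(K\hat{\pi})$ because non-essential curves have zero geometric intersection with every class and hence commute with everything. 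I see no gap.
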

\begin{proof}
For $i=0$, we compute the annihilator of $A_i$ in $U(K\hat{\pi})$. For $i=1$, the proof is exactly the same. Let $\Be$ be an element of the annihilator of $A_0$ in $U(K\hat{\pi})$. Then by (\ref{Basis wt ut by PBW}),
\begin{align*}
\Be=\sum_{i=1}^n c_i\wt{\Be_{i_1}}\ldots\wt{\Be_{i_{h_i}}}\ut{\Be_{i_{{h_i}+1}}}\ldots\ut{\Be_{i_{{h_i}+{l_i}}}}
\end{align*}

\noindent
where $\wt{\Be_{i_1}},\ldots,\wt{\Be_{i_{h_i}}} \in \wt{T}, \ut{\Be_{i_{{h_i}+1}}},\ldots,\ut{\Be_{i_{{h_i}+{l_i}}}}\in \ut{T}, h,l\in \Z_{\geq 0}, \wt{\Be_{i_1}}\leq\ldots\leq \wt{\Be_{i_{h_i}}}, \ut{\Be_{i_{{h_i}+1}}}\leq\ldots\leq\ut{\Be_{i_{{h_i}+{l_i}}}}, c_i\neq0 $ for all $i \in \lbrace1, 2, . . . , m\rbrace$. We prove each free homotopy class $\Be_{i_{j}}$ is non-essential. We have
\begin{align*}
0&=[\wt{\Al^m}, \Be]_{TWG}\\
&= \sum_{i=1}^n c_i[\ut{\Al^m}, \wt{\Be_{i_1}}\ldots\wt{\Be_{i_{h_i}}}\ut{\Be_{i_{{h_i}+1}}}\ldots\ut{\Be_{i_{{h_i}+{l_i}}}}]_{TWG}\\
\begin{split}
&= \sum_{i=1}^n c_i\biggl(\sum_{j=1}^{h_i} \wt{\Be_{i_1}}\ldots\wt{\Be_{i_{j-1}}}\bigl([\wt{\Al^m}, \wt{\Be_{i_j}}]_{TWG}\bigr)\wt{\Be_{i_{j+1}}}\ldots\wt{\Be_{i_{h_i}}}\ut{\Be_{i_{{h_i}+1}}}\ldots\ut{\Be_{i_{{h_i}+{l_i}}}}\\
& \quad\quad\quad+ \sum_{j={h_i}+1}^{{h_i}+{l_i}} \wt{\Be_{i_1}}\ldots\wt{\Be_{i_{h_i}}}\ut{\Be_{i_{{h_i}+1}}}\ldots\ut{\Be_{i_{j-1}}}\bigl([\wt{\Al^m}, \ut{\Be_{i_j}}]_{TWG}\bigr)\ut{\Be_{i_{j+1}}}\ldots\ut{\Be_{i_{{h_i}+{l_i}}}}\biggr)\\
&= \sum_{i=1}^n\biggl(\sum_{j=1}^{h_i} m\Bigl(\sum_{P_{i_j} \in \Al\cap \Be_{i_j}}\wt{\Be_{i_1}}\ldots\wt{\Be_{i_{j-1}}}\wt{(\Al^m*_{P_{i_j}}\Be_{i_j})}_{0}\wt{\Be_{i_{j+1}}}\ldots\wt{\Be_{i_{h_i}}}\ut{\Be_{i_{{h_i}+1}}}\ldots\ut{\Be_{i_{{h_i}+{l_i}}}}\Bigr)\\
& \quad\quad- \sum_{j=1}^{h_i} m\Bigl(\sum_{P_{i_j} \in \Al\cap \Be_{i_j}}\wt{\Be_{i_1}}\ldots\wt{\Be_{i_{j-1}}}\wt{(\Al^m*_{P_{i_j}}\Be_{i_j})}_{\infty}\wt{\Be_{i_{j+1}}}\ldots\wt{\Be_{i_{h_i}}}\ut{\Be_{i_{{h_i}+1}}}\ldots\ut{\Be_{i_{{h_i}+{l_i}}}}\Bigr)\\
& \quad\quad+ \sum_{j={h_i}+1}^{{h_i}+{l_i}} m\Bigl(\sum_{P_{i_j} \in \Al\cap \Be_{i_j}}\varepsilon_{P_{i_j}}(\Al^m,\Be_{i_j})\wt{\Be_{i_1}}\ldots\wt{\Be_{i_{h_i}}}\ut{\Be_{i_{{h_i}+1}}}\ldots\ut{\Be_{i_{j-1}}}\ut{(\Al^m*_{P_{i_j}}\Be_{i_j})}_{0}\ut{\Be_{i_{j+1}}}\ldots\ut{\Be_{i_{{h_i}+{l_i}}}}\Bigr)\\
& \quad\quad+ \sum_{j={h_i}+1}^{{h_i}+{l_i}} m\Bigl(\sum_{P_{i_j} \in \Al\cap \Be_{i_j}}\varepsilon_{P_{i_j}}(\Al^m,\Be_{i_j})\wt{\Be_{i_1}}\ldots\wt{\Be_{i_{h_i}}}\ut{\Be_{i_{{h_i}+1}}}\ldots\ut{\Be_{i_{j-1}}}\ut{(\Al^m*_{P_{i_j}}\Be_{i_j})}_{\infty}\ut{\Be_{i_{j+1}}}\ldots\ut{\Be_{i_{{h_i}+{l_i}}}}\Bigr)\biggr)\\
\end{split}
\end{align*}

Fix a metric $X \in Teich(S)$ and take $\Al$ to be any simple $X$-geodesic. By using the relation $xy=yx-[x,y]$, we can rearrange the terms
\begin{align*}
\lbrace \wt{\Be_{i_1}},\ldots,\wt{\Be_{i_{j-1}}},\wt{(\Al^m*_{P_{i_j}}\Be_{i_j})}_{0},\wt{\Be_{i_{j+1}}},\ldots,\wt{\Be_{i_{h_i}}},\ut{\Be_{i_{{h_i}+1}}},\ldots,\ut{\Be_{i_{{h_i}+{l_i}}}}\rbrace
\end{align*}

\noindent
in ascending order with respect to $\leq$. Assume they are elements of the set $\mathcal{B}$ of (\ref{Basis wt ut by PBW}). We also rearrange the terms 
\begin{align*}
\lbrace \wt{\Be_{i_1}},\ldots,\wt{\Be_{i_{j-1}}},\wt{(\Al^m*_{P_{i_j}}\Be_{i_j})}_{\infty},\wt{\Be_{i_{j+1}}},\ldots,\wt{\Be_{i_{h_i}}},\ut{\Be_{i_{{h_i}+1}}},\ldots,\ut{\Be_{i_{{h_i}+{l_i}}}}\rbrace, \\
\lbrace\wt{\Be_{i_1}},\ldots,\wt{\Be_{i_{h_i}}}\ut{\Be_{i_{{h_i}+1}}},\ldots,\ut{\Be_{i_{j-1}}},\ut{(\Al^m*_{P_{i_j}}\Be_{i_j})}_{0},\ut{\Be_{i_{j+1}}},\ldots,\ut{\Be_{i_{{h_i}+{l_i}}}}\rbrace,\\
\lbrace\wt{\Be_{i_1}},\ldots,\wt{\Be_{i_{h_i}}}\ut{\Be_{i_{{h_i}+1}}},\ldots,\ut{\Be_{i_{j-1}}},\ut{(\Al^m*_{P_{i_j}}\Be_{i_j})}_{\infty},\ut{\Be_{i_{j+1}}},\ldots,\ut{\Be_{i_{{h_i}+{l_i}}}}\rbrace.
\end{align*}

\noindent
in ascending order with respect to $\leq$. Assume they are elements of the set $\mathcal{B}$ of (\ref{Basis wt ut by PBW}).

We choose one $i$ (which we call $I$) such that $h_i+l_i$ is maximum. For each $j$, we consider $P \in \Al(X)\cap\Be_{I_j}(X)$ such that $\theta_P(X) \geq \theta_R(X)$ for all $R \in \Al(X)\cap\Be_{I_j}(X)$.

First, we will consider in the case where $1\le j \le {h_I}$. As $[\wt\Al^m, \Be]_{TWG}= 0$ for all positive integer $m$, from the above expression, there exists $\wt\Be_{i'_{j'}} \neq \wt\Be_{I_j}$ such that for infinitely many positive integer $m$, one of the following is true.\\
• $\wt{(\Al^m*_{P}\Be_{I_j})}_{0} = \wt{(\Al^m*_{Q}\Be_{i'_{j'}})}_{0}$ for some $Q \in \Al(X) \cap \Be_{i'_{j'}}(X)$. This is impossible by Lemma \ref{Prop 3.4}.\\
• $\wt{(\Al^m*_{P}\Be_{I_j})}_{0} = \wt{(\Al^m*_{Q}\Be_{i'_{j'}})}_{\infty}$ for some $Q \in \Al(X) \cap \Be_{i'_{j'}}(X)$. This is impossible by Lemma \ref{Prop 3.4}.\\
•  $\wt{(\Al^m*_{P}\Be_{I_j})}_{0} = \wt{\Be_{i'_{j'}}}$. This is impossible by Lemma \ref{Lem 2.7}.

\noindent
Thus, each $\Be_{I_j}$ is disjoint from $\Al$. As $\Al$ is an arbitrary simple closed geodesic, each $\Be_{I_j}$ is disjoint from every simple closed geodesic on the surface. Hence by Lemma \ref{Lem 3.6}, each $\Be_{I_j}$ is non-essential in the case where $1\le j \le {h_I}$.

Next, we will consider in the case where ${h_I}+1\le j \le {h_I}+{l_I}$. As $[\wt\Al^m, \Be]_{TWG}= 0$ for all positive integer $m$, from the above expression and, there exists $\ut\Be_{i'_{j'}} \neq \ut\Be_{I_j}$ such that for infinitely many positive integer $m$, one of the following is true.\\
• $\ut{(\Al^m*_{P}\Be_{I_j})}_{0} = \pm\ut{(\Al^m*_{Q}\Be_{i'_{j'}})}_{0}$ for some $Q \in \Al(X) \cap \Be_{i'_{j'}}(X)$. This is impossible by Lemma \ref{Key Lem} and Lemma \ref{Prop 3.4}.\\
• $\ut{(\Al^m*_{P}\Be_{I_j})}_{0} = \pm\ut{(\Al^m*_{Q}\Be_{i'_{j'}})}_{\infty}$ for some $Q \in \Al(X) \cap \Be_{i'_{j'}}(X)$. This is impossible by Lemma \ref{Key Lem} and Lemma \ref{Prop 3.4}.\\
•  $\ut{(\Al^m*_{P}\Be_{I_j})}_{0} = \pm\ut{\Be_{i'_{j'}}}$. This is impossible by Lemma \ref{Key Lem} and Lemma \ref{Lem 2.7}.

\noindent
For the same reason as before, each $\Be_{I_j}$ is non-essential in the case where ${h_I}+1\le j \le {h_I}+{l_I}$. 

Now, an element 

\begin{align*}
\Be-c_I\wt{\Be_{I_1}}\ldots\wt{\Be_{I_{h_I}}}\ut{\Be_{I_{{h_I}+1}}}\ldots\ut{\Be_{I_{{h_I}+{l_I}}}}
\end{align*}

\noindent
also belongs to $\mathrm{Ann}_{U(K\hat{\pi})}{A_0}$. We repeat the above argument until each $\Be_{i_j}$ is non-essential. Here, the proof is complete. 

\end{proof}


\bibliography{master}

\begin{thebibliography}{1}

\bibitem{Beardon1983}
Alan~F. Beardon.
\newblock {\em The geometry of discrete groups}, volume~91 of {\em Graduate
  Texts in Mathematics}.
\newblock Springer-Verlag, New York, 1983.

\bibitem{Chas-Kabiraj}
Moira Chas and Arpan Kabiraj.
\newblock The {L}ie bracket of undirected closed curves on a surface.
\newblock {\em Trans. Amer. Math. Soc.}, 375(4):2365--2386, 2022.

\bibitem{Etingof2006}
Pavel Etingof.
\newblock Casimirs of the {G}oldman {L}ie algebra of a closed surface.
\newblock {\em Int. Math. Res. Not.}, pages Art. ID 24894, 5, 2006.

\bibitem{Goldman}
William~M. Goldman.
\newblock Invariant functions on {L}ie groups and {H}amiltonian flows of
  surface group representations.
\newblock {\em Invent. Math.}, 85(2):263--302, 1986.

\bibitem{Kabiraj2016}
Arpan Kabiraj.
\newblock Center of the {G}oldman {L}ie algebra.
\newblock {\em Algebr. Geom. Topol.}, 16(5):2839--2849, 2016.

\bibitem{Kabiraj2018}
Arpan Kabiraj.
\newblock Equal angles of intersecting geodesics for every hyperbolic metric.
\newblock {\em New York J. Math.}, 24:167--181, 2018.

\bibitem{Kawazumi-Kuno2013}
Nariya Kawazumi and Yusuke Kuno.
\newblock The center of the {G}oldman {L}ie algebra of a surface of infinite
  genus.
\newblock {\em Q. J. Math.}, 64(4):1167--1190, 2013.

\bibitem{Kerckhoff1983}
Steven~P. Kerckhoff.
\newblock The {N}ielsen realization problem.
\newblock {\em Ann. of Math. (2)}, 117(2):235--265, 1983.

\bibitem{Turaev1991}
Vladimir~G. Turaev.
\newblock Skein quantization of {P}oisson algebras of loops on surfaces.
\newblock {\em Ann. Sci. \'{E}cole Norm. Sup. (4)}, 24(6):635--704, 1991.

\end{thebibliography}
\bibliographystyle{plain}

\end{document}